    \newtheorem{theorem}{Theorem}
    \newtheorem{lemma}[theorem]{Lemma}
    \newtheorem{proposition}[theorem]{Proposition}
    \newtheorem{corollary}[theorem]{Corollary}
\theoremstyle{definition} 
    \newtheorem{remark}[theorem]{Remark}
\newcommand{\fip}[1]{{\prec \hspace{-0.25em} #1\hspace{-0.25em}\succ}}
\newcommand{\eps}{\varepsilon}
\newcommand{\N}{{\mathcal N}}
\newcommand{\lstar}{{\raise-0.15ex\hbox{$\scriptstyle \ast$}}}
\newcommand{\var}{\text{Var }}
\theoremstyle{remark} 
\newcommand{\rr}{\mathbb{R}}
\newcommand{\ed}{\stackrel{d}{=}}
\newcommand{\airy}{\textup{Airy}_\beta}
\newcommand{\G}[2]{\mathcal{G}_{#2}^{(#1)}}
\newcommand{\Hop}[1]{\mathcal{H}_{\beta}^{(#1)}}
\newcommand{\Hmat}[1]{H_{n,\beta}^{(#1)}}
\newcommand{\varp}[2]{\varphi_{#1,s,t}^{(#2)}}
\newcommand{\psip}[2]{\psi_{#1,s,t}^{(#2)}}
\newcommand{\Span}{\textup{Span}\,}
\begin{document}

\title{Evolution of the stochastic Airy eigenvalues under a changing boundary}

\author{Angelica Gonzalez\footnote{University of Arizona, agonzalez@math.arizona.edu}\hspace{.8cm} Diane Holcomb\footnote{KTH, holcomb@kth.se}}

\maketitle

\begin{abstract}
The $\airy$ point process, originally introduced by Ram\'irez, Rider, and Vir\'ag \cite{RRV}, is defined as the spectrum of the stochastic Airy operator $\mathcal{H}_\beta$ acting on a subspace of $L^2[0,\infty)$ with Dirichlet boundary condition. In this paper we study the coupled family of point processes defined as the eigenvalues of $\mathcal{H}_\beta$ acting on a subspace of $L^2[t,\infty)$. These point processes are coupled through the Brownian term of $\mathcal{H}_\beta$. We show that these point processes as a function of $t$ are differentiable with explicitly computable derivative. Moreover when recentered by $t$ the resulting point process is stationary. This process can also be viewed as an analogue to the `GUE minor process' in the tridiagonal setting.
\end{abstract}

\section{Introduction}

In this paper we work with a generalization of Gaussian Orthogonal, Unitary, and Symplectic ensembles, which were first introduced by Wigner in the 50's. These matrix models have many unique properties including an explicitly computable eigenvalue distribution given by
\begin{align}
\label{eq:hermite}
p_{H}(\lambda_1,\dots,\lambda_n)=\frac{1}{Z_{n,\beta}} \prod_{1\le i<j\le n} |\lambda_i-\lambda_j|^\beta   \prod\limits_{i=1}^ne^{-\frac{\beta}4  \lambda_i^2},
\end{align}
for $\beta = 1, 2, $ or $4$. The $\beta$-Hermite ensemble generalizes this to a set of $n$ points on the line whose joint density is given by \eqref{eq:hermite} for any $\beta>0$.  This point process is no longer related to a full matrix model, but it does have an associated tridiagonal matrix model. The model originally introduced by Dumitriu and Edelman \cite{DE} is as follows:  Let
\begin{align}
\label{eq:tridiagonal}
A_\beta \sim \frac{1}{\sqrt \beta} \left[ \begin{array}{ccccc} \N(0,2) & \chi_{(n-1)\beta} &&& \\ \chi_{(n-1)\beta} & \N(0,2) & \chi_{(n-2)\beta} && \\ & \ddots & \ddots & \ddots & \\ && \chi_{2\beta} & \N(0,2) & \chi_\beta \\ &&& \chi_\beta & \N(0,2) \end{array} \right],
\end{align}
with all of the entries independent. The $\chi$ random variables are subscripted by their parameter. In the case where $k$ is an integer, a $\chi_k$ random variable has the same distribution as the norm of a vector in $\rr^k$ with independent $\N (0,1)$ entries. These is a natural generalization in the non-integer case $k>0$.  

Edelman and Sutton observed that this matrix model may be seen as an operator on step functions, and using this observation conjectured that in the limit the upper edge of the spectrum will converge to a certain differential operator \cite{ES}. Indeed, in this setting at the upper and lower edge of the spectrum Ram\'irez, Rider, and Vir\'ag showed that the centered and scaled matrix model converges in a weak sense to the  ``stochastic Airy operator" (denoted here by $\mathcal{H}_\beta$) which in turn is used to show convergence of the eigenvalues \cite{RRV}.  Let 
\begin{equation}
\mathcal{H}_\beta= - \frac{d^2}{dx^2}+x+ \frac{2}{\sqrt \beta} b'(x)
\end{equation}
where we take $b'$ to be a white noise.  A precise definition and many properties of this operator  can be found in \cite{RRV}.  We review the necessary ones below. 

For our purposes it is sufficient to define an eigenfunction/eigenvalue pair in the following way:  Let 
\[L^*[t,\infty)= \left\{f\in L^2[t,\infty)|\ f(t)=0, f' \text{ exists a.e. and } \int_t^\infty (f')^2+(1+x)f^2 dx<\infty\right\},\]
then $(\varphi ,\lambda)$ is an eigenvalue/eigenfunction pair for $\mathcal{H}_\beta$ acting on $L^*[t, \infty)$  if $\|\varphi\|_2=1$, and 
\begin{equation}
\varphi ''(x)= \frac{2}{\sqrt \beta} \varphi(x)b'(x)+(x-\lambda)\varphi(x)
\end{equation}
holds in the sense of distributions.  This may be written as 
\begin{equation}
\label{eq:eveft}
\varphi'(x)- \varphi'(t)= \frac{2}{\sqrt \beta} \varphi(x)b(x)-\frac{2}{\sqrt \beta} \int_t^x \varphi'(s)b(s)ds+ \int_s^x(s-\lambda)\varphi (s)ds.
\end{equation}
In this sense, the set of eigenvalues is a deterministic function of the Brownian path $b$.  Note that this is a slight generalization from the case considered in \cite{RRV} where they focused on $\mathcal{H}_\beta$ acting on functions in $L^*[0,\infty)$.

The eigenvalues of $\mathcal{H}_\beta$ acting on $L^*[t,\infty)$ are ``nice'' in the following sense: 
\begin{theorem}
\text{\cite{RRV}}
With probability one, the eigenvalues of $\mathcal{H}_\beta$ are distinct (of multiplicity 1) with no accumulation point, and for each $k\geq 0$ the set of eigenvalues of $\mathcal{H}_\beta$ has a well defined $(k+1)$st lowest element $\Lambda_k(\beta)$.
\end{theorem}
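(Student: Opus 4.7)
The plan is to prove the statement by treating $\mathcal{H}_\beta$ as a self-adjoint operator on $L^2[t,\infty)$ via its Friedrichs extension from the quadratic form
\begin{equation*}
Q(f) = \int_t^\infty \left[(f')^2 + x f^2\right] dx + \frac{2}{\sqrt\beta}\int_t^\infty f^2\, b'(x)\, dx,
\end{equation*}
where the noise term is interpreted, after formal integration by parts using $f(t)=0$ and the decay of $f$ at infinity, as $-\frac{4}{\sqrt\beta}\int_t^\infty f f' b\, dx$. The three properties in the theorem --- discreteness, absence of an accumulation point, and simplicity --- then follow from standard Sturm--Liouville theory once we show (i) $Q$ is almost surely well-defined, bounded below, and coercive on $L^*[t,\infty)$, and (ii) the form domain embeds compactly into $L^2[t,\infty)$.

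First I would establish almost sure estimates on Brownian motion of the form $|b(x)| \leq \eps \sqrt{1+x} + C_\eps(\omega)$ for all $x \geq t$, which hold with probability one by the law of the iterated logarithm. Applied to the rewritten noise term via Cauchy--Schwarz, this gives
\begin{equation*}
\left|\frac{4}{\sqrt\beta}\int_t^\infty f f' b\, dx\right| \leq \tfrac{1}{2}\|f'\|_2^2 + \tfrac{1}{2}\int_t^\infty (1+x)f^2 dx + C(\omega)\|f\|_2^2,
\end{equation*}
after absorbing the $\eps$ appropriately. Consequently $Q(f) \geq \tfrac{1}{2}\int_t^\infty \bigl[(f')^2 + x f^2\bigr]dx - C(\omega)\|f\|_2^2$, which proves that $Q$ is bounded below almost surely and that its form domain coincides with $L^*[t,\infty)$. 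Standard Friedrichs extension then produces a self-adjoint operator whose eigenvalue equation in the weak sense agrees with \eqref{eq:eveft}.

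Next I would prove compactness of the embedding $L^*[t,\infty)\hookrightarrow L^2[t,\infty)$: a bounded sequence in $L^*$ has uniformly bounded $\int x f_n^2$, so tails beyond $[t,M]$ are uniformly small, and on each compact interval Rellich gives $H^1$-compactness; a diagonal argument produces an $L^2$-convergent subsequence. Compact resolvent together with the lower bound on $Q$ yields a discrete spectrum $\Lambda_0 \leq \Lambda_1 \leq \cdots$ with no finite accumulation point, accounting for the ``nice'' structure and the well-defined $(k+1)$st element via the min--max formula
\begin{equation*}
\Lambda_k = \inf_{\dim V = k+1,\ V\subset L^*[t,\infty)} \sup_{f\in V,\,\|f\|_2=1} Q(f).
\end{equation*}

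The remaining point --- simplicity --- is the cleanest: if $\varphi_1,\varphi_2$ are two $L^2$ eigenfunctions at the same $\lambda$, both satisfying \eqref{eq:eveft}, I would compute the Wronskian $W(x) = \varphi_1'(x)\varphi_2(x) - \varphi_1(x)\varphi_2'(x)$. Subtracting the integrated eigenvalue equations for $\varphi_1$ and $\varphi_2$, the white-noise contribution to $W(x)-W(t)$ is proportional to $\int_t^x(\varphi_1 \varphi_2 - \varphi_2\varphi_1)\, db = 0$, and the remaining deterministic terms telescope to zero, so $W$ is constant on $[t,\infty)$. Since $\varphi_1(t)=\varphi_2(t)=0$, we get $W\equiv 0$ and hence linear dependence, giving multiplicity one. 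The main obstacle I anticipate is the rigorous handling of the white-noise term in $Q$ and in the Wronskian computation --- i.e.\ justifying the integration by parts and confirming that the sample-path estimate $|b(x)|\ll \sqrt{1+x}$ is strong enough to make the perturbation subordinate to the harmonic-oscillator part of the form.
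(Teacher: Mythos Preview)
The paper does not prove this theorem at all --- it is quoted as a background result from \cite{RRV}, so there is no in-paper proof to compare against. Your outline is, in fact, essentially the argument that Ram\'irez--Rider--Vir\'ag give: define the quadratic form, use the almost-sure growth bound $|b(x)|=o(\sqrt{1+x})$ to show the noise term is form-bounded by the deterministic part with relative bound less than $1$, conclude that the closed form has domain $L^*[t,\infty)$ and is semibounded, and then use compactness of the embedding $L^*\hookrightarrow L^2$ (via the weight $x$ controlling the tails) to obtain discrete spectrum and the min--max characterization. Simplicity is likewise argued there from the second-order nature of the equation together with the Dirichlet condition at $t$.

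One small correction to your Wronskian step: the phrase ``proportional to $\int_t^x(\varphi_1\varphi_2-\varphi_2\varphi_1)\,db$'' suggests you are treating \eqref{eq:eveft} as an It\^o equation, but the eigenvalue relation is pathwise and $b$ enters through ordinary (Riemann) integrals after the integration by parts already performed in \eqref{eq:eveft}. The cleaner way to run the argument is to note that any solution of the integrated equation with $\varphi(t)=0$ is determined by $\varphi'(t)$ via a Volterra-type fixed point, so two eigenfunctions with the same $\lambda$ and the same Dirichlet condition differ by a scalar. This is the same conclusion, just without invoking a stochastic integral that is not actually present.
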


In this paper we study the evolution of the eigenvalues of $\mathcal{H}_\beta$ acting on $L^*[t,\infty)$ as a process in $t$. That is we consider the operator $\mathcal{H}_\beta$ acting on $L^*[t,\infty)$ and study the evolution of the eigenvalues as $t$ varies. We will denote the operator acting on the particular domain by 
\begin{equation}
\label{eq:Ht}
\mathcal{H}_\beta^{(t)}= - \frac{d^2}{dx^2}+x+ \frac{2}{\sqrt \beta}b'_x, \qquad \Hop{t}: L^*[t,\infty) \to L^2[t, \infty),
\end{equation} 
and define $\Lambda_1(t)< \Lambda_2(t)< \Lambda_3(t)< \cdots$ to be the ordered eigenvalues of $\mathcal{H}_\beta^{(t)}$. We observe that the eigenvalue/ eigenfunction condition may be written in the same way as before, but also has an interpretation in terms of a shifted Brownian motion. That is $(\varphi, \lambda)$ is an eigenvalue/eigenfunction pair of $\Hop{t}$ if for $x\ge t$
\begin{align}
\varphi'(x)- \varphi'(t)&=  \frac{2}{\sqrt \beta} \varphi(x)(b(x)-b(t))-\frac{2}{\sqrt \beta} \int_t^x \varphi'(s)(b(s)-b(t))ds+ \int_t^x(s-\lambda)\varphi (s)ds .
\end{align}

\begin{theorem}
\label{thm:process}
Let $k$ be any fixed positive integer and let $\mathcal{G}_t^{(k)} = \{\Lambda_1(t),..., \Lambda_k(t)\}$. The process $\mathcal{G}_t^{(k)}$ is differentiable in time and for every fixed $t$ we have that
\begin{equation}
\label{eq:derivativedist}
\frac{d}{dt} \mathcal{G}_t^{(k)} \ed \{\Gamma_{1}(t),..., \Gamma_{k}(t)\}, \qquad \text{ with i.i.d.}  \quad \Gamma_{i}(t) \sim \Gamma(\tfrac{\beta}{2}, \tfrac{2}{\beta}).
\end{equation}
Moreover, the process $\mathcal{G}_t^{(k)}-t$ is stationary.
\end{theorem}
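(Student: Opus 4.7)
The plan is to establish the theorem in three stages: stationarity, existence and form of the derivative via a Hadamard-type variational formula, and distributional identification through a finite-dimensional tridiagonal approximation.

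\textbf{Stationarity.} For any $s \ge 0$ I would substitute $y = x-s$ and $\tilde b_s(y) = b(y+s)-b(s)$ in the weak eigenvalue equation for $\Hop{t+s}$. Since $\tilde b_s$ is again a standard Brownian motion, the equation becomes the weak eigenvalue equation for the stochastic Airy operator driven by $\tilde b_s$ acting on $L^*[t,\infty)$, with the eigenvalue shifted by $s$. Using $\tilde b_s \ed b$, this yields the joint equality in distribution
\[
(\Lambda_k(t+s)-s)_{t \ge 0,\, k \ge 1} \;\ed\; (\Lambda_k(t))_{t \ge 0,\, k \ge 1},
\]
from which stationarity of $(\mathcal{G}_t^{(k)}-t)_{t \ge 0}$ follows immediately.

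\textbf{Derivative formula.} By Theorem~1 each $\Lambda_k(t)$ is a.s.\ simple, so standard perturbation theory provides a smooth (off a null set) selection of normalized eigenfunctions $\varphi_{k,t}$. Starting from the self-adjoint quadratic form
\[
\Lambda_k(t) = \int_t^\infty \varphi_{k,t}'(x)^2\, dx + \int_t^\infty x\, \varphi_{k,t}(x)^2\, dx + \frac{2}{\sqrt\beta} \int_t^\infty \varphi_{k,t}(x)^2\, db(x),
\]
I would differentiate in $t$. All boundary contributions at $x=t$ involving $\varphi_{k,t}(t)=0$ vanish, and the variational terms coming from $\partial_t \varphi_{k,t}$ collapse via the eigenvalue equation, self-adjointness, the norm identity $\tfrac{d}{dt}\|\varphi_{k,t}\|_2^2=0$, and the relation $\partial_t\varphi_{k,t}(t) = -\varphi_{k,t}'(t)$ obtained from differentiating $\varphi_{k,t}(t) \equiv 0$; a direct computation then yields the Hadamard identity
\[
\frac{d}{dt}\Lambda_k(t) = \varphi_{k,t}'(t)^2.
\]
The subtlety is exchanging $\partial_t$ with the white-noise integral, which I would address by first passing to the shifted coordinate $y = x-t$ so the domain of integration is fixed, then invoking the resolvent and trace-class bounds from \cite{RRV}.

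\textbf{Distribution.} This is the main obstacle. By stationarity it suffices to identify the joint law of $(\varphi_{k,0}'(0)^2)_{k=1}^K$. My plan is to obtain it as the edge scaling limit of the tridiagonal model \eqref{eq:tridiagonal}. Writing $A_{n,\beta}=U\Lambda U^T$, it is classical for the $\beta$-Hermite ensemble that the squared first components of the eigenvectors, $q_k := U_{1,k}^2$, are distributed as $\mathrm{Dir}(\tfrac{\beta}{2},\dots,\tfrac{\beta}{2})$ and are independent of $\Lambda$. Under the RRV edge scaling $x \leftrightarrow k \cdot n^{-1/3}$, a discrete computation identifies the continuum boundary derivative $\varphi_{k,0}'(0)$ as the limit of $n^{1/2}\,U_{1,k}$, so that $\varphi_{k,0}'(0)^2$ is the limit of $n \cdot q_k$. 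A standard Dirichlet-to-Gamma computation shows that, for any fixed $K$ and any choice of indices $i_1,\dots,i_K$, the variables $(n\,q_{i_1},\dots,n\,q_{i_K})$ converge jointly to $K$ i.i.d.\ $\mathrm{Gamma}(\tfrac{\beta}{2},\tfrac{2}{\beta})$ random variables; picking $i_1,\dots,i_K$ as the indices of the top $K$ eigenvalues of $A_{n,\beta}$ (a choice independent of $q$) and combining with the joint edge convergence of eigenvalues and eigenfunctions from \cite{RRV} proves the claim. The hardest technical step is upgrading this to convergence of the boundary derivatives themselves, which requires tightness of $n^{1/2}\,U_{1,k}$ uniform in $n$; I would establish this through the Riccati/phase-function representation of \cite{RRV}, by controlling the phase close to $x=0$.
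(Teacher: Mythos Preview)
Your stationarity argument matches the paper's exactly.

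For the derivative formula, however, there is a genuine gap. You invoke ``standard perturbation theory'' to obtain a smooth family $t\mapsto\varphi_{k,t}$ and then differentiate the quadratic form directly. But smoothness (even differentiability) of $t\mapsto\varphi_{k,t}$ is precisely the nontrivial analytic content here; Kato--Rellich type results require a smooth one-parameter family of operators on a fixed space, while in the present situation the domain varies with $t$ and, after the shift $y=x-t$ that you propose, the $t$-dependence migrates into the noise term $b'(y+t)$, which cannot be differentiated in $t$. The ``resolvent and trace-class bounds from \cite{RRV}'' you cite do not address this. The paper circumvents the issue entirely by never differentiating the eigenfunction: it builds explicit test functions $\varp{k}{a}$, $\psip{k}{a}$ that agree with $f_{k,t}$ away from the boundary and are linear on a short segment near it, feeds these into the Courant--Fischer characterization to sandwich the finite difference $(\Lambda_k(t)-\Lambda_k(s))/(t-s)$ between $\frac{\eps}{1+\eps}(f'_{k,s}(s))^2$ and $\frac{1+\eps}{\eps}(f'_{k,t}(t))^2$ up to $o(1)$ errors (Proposition~\ref{prop:finitediff}), and then separately proves $f'_{k,t}(t)\to f'_{k,s}(s)$ as $t\to s$ via an ``almost linear near the boundary'' estimate (Proposition~\ref{prop:almostlinear}) combined with compactness of the eigenfunctions in $L^*$. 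This is where the real work lies, and your outline skips it.

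For the distribution, your strategy is the same as the paper's: Dirichlet spectral weights, the elementary $n q_i\Rightarrow\Gamma(\tfrac\beta2,\tfrac2\beta)$ limit, and identification of $\sqrt{n}\,v_1^{(i)}$ with $f'_{i,0}(0)$ in the edge scaling. The one methodological difference is the ``hardest technical step'': rather than a Riccati/phase argument, the paper proves directly from the three-term recurrence, using Doob's inequality on the partial sums of the random coefficients, that the discrete eigenvector is approximately linear near the edge, namely $|v_{\lfloor tn^{1/3}\rfloor}-\lfloor tn^{1/3}\rfloor v_1|\le C t^2 n^{-1/6}$ on a high-probability set (Proposition~\ref{lem:discretelinear}). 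This is both elementary and sufficient to pass the boundary derivative through the limit.
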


\begin{remark}
Note that in the above characterization the eigenvalue/eigenfunction pairs are defined in a path--wise sense. In this paper all calculations unless otherwise noted should be understood in this sense. Because of this it is sufficient to prove various estimates and limits for an arbitrary value of $t$ and any Brownian path in a set of full measure. 
\end{remark}

\begin{corollary}
The process $\G{t}{k}-t$ is not reversible.
\end{corollary}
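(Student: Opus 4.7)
The plan is a contradiction argument based on the explicit derivative law provided by Theorem~\ref{thm:process}. I would suppose that $X_t := \G{k}{t} - t$ is reversible. Combined with the stationarity already asserted in Theorem~\ref{thm:process}, this forces $(X_t)_t \ed (X_{-t})_t$, and since the eigenvalues are a.s.\ distinct and depend differentiably on $t$, I can identify $X_t$ with the ordered tuple $(\Lambda_1(t) - t, \dots, \Lambda_k(t) - t)$. Reading off the joint law of $(X_0, X_\eps)$, dividing by $\eps$, and letting $\eps \to 0$ via differentiability of the $\Lambda_i$, I obtain that the derivative vector $(\Lambda_i'(0) - 1)_{i=1}^k$ must be equal in distribution to its negative.

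The second step is to observe that this symmetry is incompatible with Theorem~\ref{thm:process}: the multi-set $\{\Lambda_1'(0), \dots, \Lambda_k'(0)\}$ is distributed as $k$ i.i.d.\ $\Gamma(\beta/2, 2/\beta)$ samples, so every $\Lambda_i'(0) \geq 0$ almost surely. Hence $\min_i (\Lambda_i'(0) - 1) \geq -1$ a.s. The hypothetical symmetry applied to the negated vector would force the same lower bound there, giving $\max_i \Lambda_i'(0) \leq 2$ almost surely; but a $\Gamma(\beta/2, 2/\beta)$ law has unbounded support on $[0,\infty)$, so $\max_i \Lambda_i'(0)$ exceeds $2$ with positive probability, a contradiction.

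The main (and essentially only) subtle step will be justifying the passage from reversibility of the point-process-valued process to reversibility of the ordered-tuple-valued process. This is routine given the a.s.\ strict ordering of the $\Lambda_i(t)$ and the continuous (in fact differentiable) dependence on $t$, so the labelling by order is pathwise well-defined. After that bookkeeping, the argument reduces to the elementary observation that a Gamma distribution is not symmetric about any translate of itself, which makes the conclusion immediate even in the simplest case $k=1$.
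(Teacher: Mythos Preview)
Your proposal is correct and follows essentially the same approach as the paper: both argue that reversibility would force the derivative law $\Gamma(\tfrac{\beta}{2},\tfrac{2}{\beta})-1$ to coincide with $1-\Gamma(\tfrac{\beta}{2},\tfrac{2}{\beta})$, and then observe that these are not equal. The paper is simply terser---it invokes the asymmetry without your explicit support argument and works directly with a single eigenvalue rather than discussing the passage from point process to ordered tuple.
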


\begin{proof}
To see that the process cannot be reversible it is enough to observe that for a single eigenvalue the distribution of the derivative of the forward process is $\Gamma(\frac{\beta}{2}, \frac{2}{\beta}) -1$. On the other hand the distribution of the derivative of the reversed process is $1-\Gamma (\frac{\beta}{2}, \frac{2}{\beta})$, which are not equivalent.  
\end{proof}


While the study of the eigenvalues of $\mathcal{H}_\beta$ on a changing domain is itself interesting it also has a connection to the  original tridiagonal model in (\ref{eq:tridiagonal}). Moreover this connection may be used to derive properties of the limiting process including the distribution of the derivatives. We are interested in the behavior of the spectrum at the upper and so begin by centering at $2\sqrt{n}$. We denote the centered, truncated matrix obtained by removing the first $k-1$ rows and columns by
\begin{equation}
\label{eq:ktridiagonal}
\Hmat{k} = 2\sqrt{n}I -\frac{1}{\sqrt \beta} \left[ \begin{array}{ccccc} N(0,2) & \chi_{(n-k)\beta} &&& \\ \chi_{(n-k)\beta} & N(0,2) & \chi_{(n-k-1)\beta} && \\ & \ddots & \ddots & \ddots & \\ && \chi_{2\beta} & N(0,2) & \chi_\beta \\ &&& \chi_\beta & N(0,2) \end{array} \right],
\end{equation}
and denote its ordered eigenvalues by $\lambda_1(n,k) < \lambda_{2}(n,k)< \cdots < \lambda_{n-k+1}(n,k)$.

\begin{theorem}
\label{thm:discretetocont}
Suppose that $\lambda_1(n,k) < \lambda_{2}(n,k)< \cdots $ be defined as above then 
\begin{equation}
\Big( \{n^{1/6}\lambda_i(n,\lfloor n^{1/3}t\rfloor)\}_{i=1}^{k}, t\ge 0 \Big) \Rightarrow \mathcal{G}_t^{(k)}
\end{equation}
where $\G{t}{k}$ eigenvalue process of $\mathcal{H}_\beta$ defined above. 
\end{theorem}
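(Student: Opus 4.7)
The strategy is to generalize the operator-level convergence of Ram\'irez--Rider--Vir\'ag \cite{RRV} from a fixed left endpoint to a moving one, and then upgrade the resulting finite-dimensional convergence to functional convergence in $t$ by exploiting the monotonicity built into the truncation.

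First, I would embed $\Hmat{k}$, following \cite{RRV}, as a symmetric operator on step functions supported on $[k/n^{1/3}, n^{2/3}]$ with Dirichlet condition at the left endpoint, using mesh size $1/n^{1/3}$ on the $x$-axis and the eigenvalue scaling $\lambda \mapsto n^{1/6}\lambda$. After centering at $2\sqrt{n}$ and absorbing the mean sizes of the $\chi$ entries into a deterministic drift matching the potential $x$ of $\Hop{t}$, the remaining random fluctuations on the diagonal and off-diagonal assemble into a single random walk which, by Donsker's invariance principle, converges to a Brownian path $b$. The key observation is that all the matrices $\Hmat{k}$ are built from one array of scalar random variables, with $k$ merely shifting the starting index; consequently the same limiting $b$ appears for every left endpoint, producing exactly the coupling of the family $\{\Hop{t}\}_{t \geq 0}$ used in Theorem \ref{thm:process}.

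For any fixed $t$, the RRV argument then applies with $k/n^{1/3} \to t$ replacing $0$ as the left endpoint. Their Riccati and eigenfunction tail estimates are local near the left endpoint and exploit only the quadratic growth of the potential, both of which are translation invariant, so the implicit constants can be taken uniform in $t$ on compact sets. Running the argument simultaneously at finitely many times $t_1 < \cdots < t_m$ using one underlying entry array yields joint convergence of $\{n^{1/6}\lambda_i(n, \lfloor n^{1/3}t_j\rfloor)\}_{i,j}$ to $\{\Lambda_i(t_j)\}_{i,j}$, supplying the finite-dimensional marginals of the process.

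The remaining ingredient is tightness in path space, and I expect this to be the main obstacle. Because $\Hmat{k+1}$ is obtained from $\Hmat{k}$ by deleting one row and column, the Cauchy interlacing theorem gives $\lambda_i(n,k) \leq \lambda_i(n,k+1) \leq \lambda_{i+1}(n,k)$, so the pre-limit process is monotone nondecreasing in $t$ and piecewise constant, with single-step jumps bounded by the local rescaled spacing. Combined with the finite-dimensional convergence and the a.s.\ differentiability of the candidate limit (Theorem \ref{thm:process}), this monotonicity promotes pointwise convergence on a dense set of times to uniform-on-compacts convergence via the standard argument for monotone processes with continuous limits. The delicate point is a uniform-in-$n$ control on the single-step jumps $\lambda_i(n,k+1) - \lambda_i(n,k)$, ruling out anomalously large increments on a vanishing window; I would handle this either by a direct rank-one perturbation estimate on $\Hmat{k}$ or by quantitative comparison to the $\Gamma(\beta/2,2/\beta)$-distributed derivative provided by Theorem \ref{thm:process}.
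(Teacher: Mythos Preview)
Your approach to the finite-dimensional convergence is essentially the same as the paper's: both invoke the RRV convergence machinery, observe that all the truncated matrices $\Hmat{\lfloor t_j n^{1/3}\rfloor}$ are built from a single array of entries so that a single Brownian path $b$ drives the entire coupled family, and then run the RRV argument jointly at finitely many times $t_1<\cdots<t_\ell$. The paper formalizes this via the shift identity $y_{n,i}^{(t_j)}(k)=y_{n,i}^{(t_1)}(k+\lfloor t_j n^{1/3}\rfloor)$ together with a Skorokhod representation to reduce to the deterministic case, but this is exactly the content of your ``same underlying entry array'' observation.

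Where you diverge is that you go beyond the paper. The paper's proof stops at convergence of finite-dimensional distributions and explicitly states that this ``completes the proof of Theorem~\ref{thm:discretetocont}''; the symbol $\Rightarrow$ in the statement is evidently intended in the FDD sense only, and no tightness argument appears. Your additional step---using Cauchy interlacing to get monotonicity of each $t\mapsto \lambda_i(n,\lfloor n^{1/3}t\rfloor)$ and then invoking the a.s.\ continuity of the limit $\G{k}{t}$ to upgrade FDD convergence to uniform-on-compacts convergence---is a standard and valid argument for monotone processes with continuous limits. In fact your worry about controlling individual single-step jumps is unnecessary: once you have monotone c\`adl\`ag paths converging in FDD to a continuous limit, that alone forces tightness (the modulus of continuity of a monotone function on $[a,b]$ is bounded by its total increment, which is controlled by the FDD limit). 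So your proposal is correct, and the tightness portion is a genuine strengthening relative to what the paper actually proves.
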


\begin{figure}
\begin{center}
\includegraphics[height=2in]{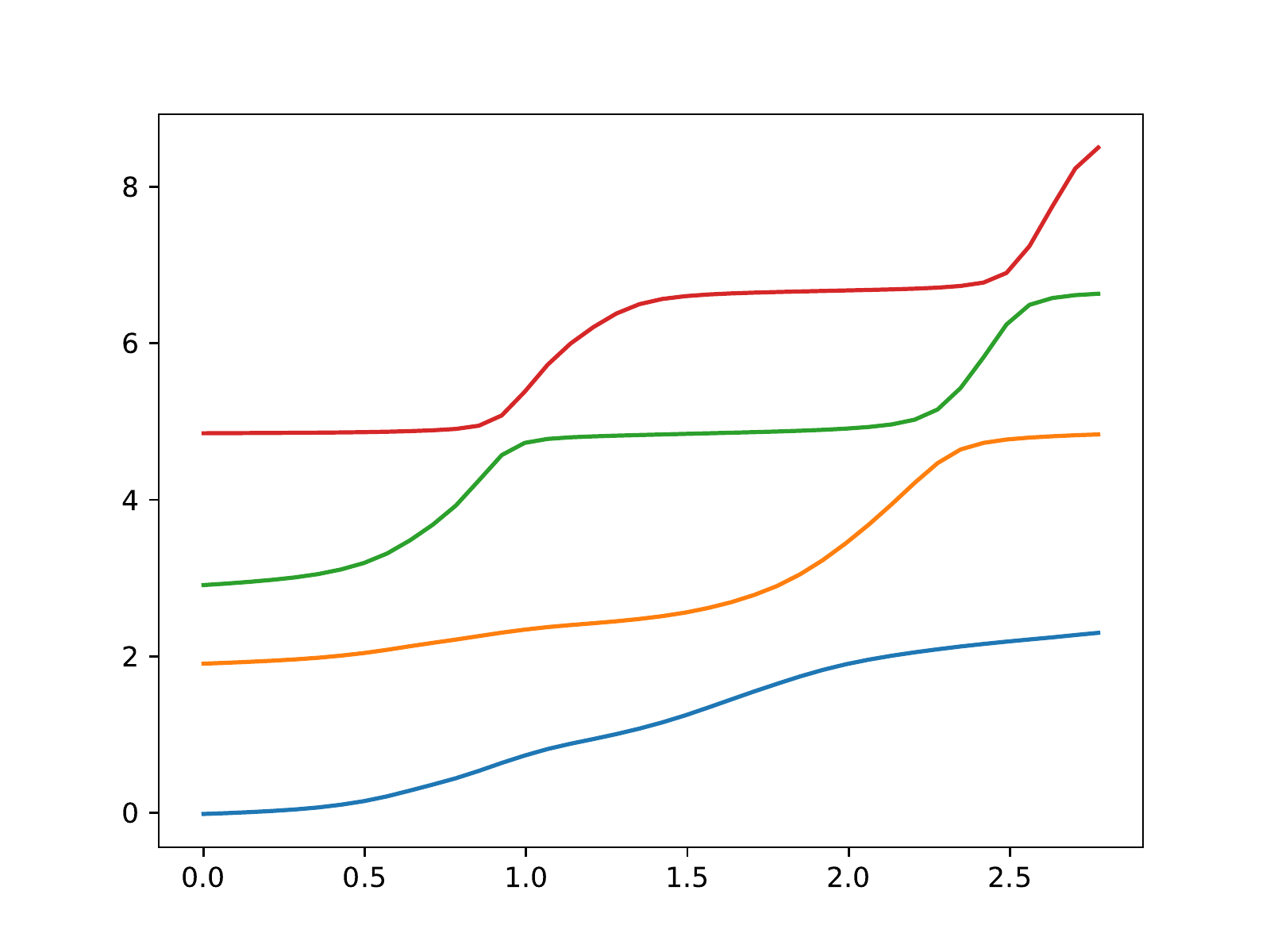}
\hspace{.5cm}
\includegraphics[height=2in]{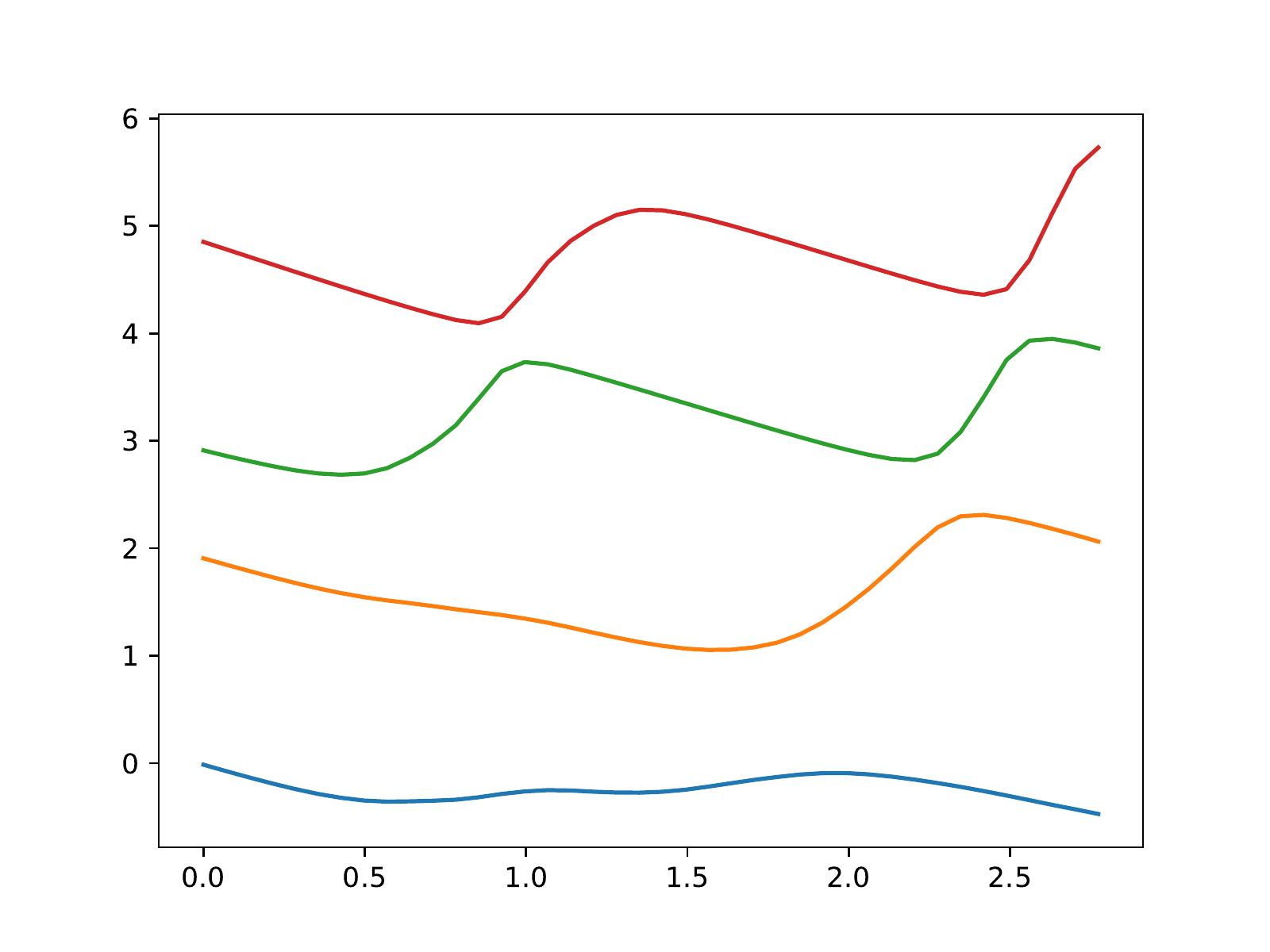}
\end{center}
\caption{Bottom eigenvalues $n^{1/6}\lambda_i(n,\lfloor n^{1/3}t\rfloor)$ of a $n^{1/6}H_{n,2}^{(\lfloor t n^{1/3}\rfloor)}$ matrix for $i=1,...,5$ as a function of $t$ with $n=6000$ before and after recentering by $t$.}
\end{figure}

The reader might notice at this point that we are essentially considering the `minor process' associated to the tridiagonal matrix model. This turns out to define a very different process than the classical `GUE minor process' when $\beta=2$ that is derived from the submatrices of the full matrix model. For more details on this classical process see \cite{FN1}. In particular the eigenvalues of that process follow rough paths. The same process may be realized by considering appropriate limits of Dyson Brownian Motions \cite{FN2}. The fact that two different process are obtained is particularly interesting in light of the fact that for both models when one considers the sub-matrix obtained by removing the first $k$ rows and columns they again have the same eigenvalue distributions, and in both cases eigenvalues of successive sub-matrices satisfy interlacing.

The paper will be organized as follows: We begin recalling properties of $\mathcal{H}_\beta$ and showing that the process $\mathcal{G}_t^{(k)}$ is stationary and differentiable. In the next section we show the convergence statement in \ref{thm:discretetocont}. Finally, in the last section we use the convergence statement to determine the distribution of the derivative vector.

\noindent \textbf{Acknowledgements:} The authors would like to thank B\'alint Vir\'ag for the problem suggestion and discussions. The work of the second author was supported in part by funding from the Knut and Alice Wallenberg foundation award number KAW 2015.0359, and Swedish Research Council award number 2018-04758.

\section{On the eigenvalues of the restricted operator}

\begin{proposition}
For any fixed $k$ the process $\G{k}{t}-t$ is stationary as a process in $t$. 
\end{proposition}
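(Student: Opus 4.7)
The plan is to exploit the stationarity of Brownian increments through a change of variable. Fix $\sigma\ge 0$ and set $\tilde b(y):=b(y+\sigma)-b(\sigma)$ for $y\ge 0$; this is a standard Brownian motion with the same law as $b$. The key step is to show that the eigenvalue problem for $\Hop{t+\sigma}$ on $L^*[t+\sigma,\infty)$ is equivalent to that for the analogous operator $\wt{\mathcal{H}}_\beta^{(t)}$ on $L^*[t,\infty)$ built from $\tilde b$, under the correspondence $\varphi(x)\leftrightarrow\tilde\varphi(y):=\varphi(y+\sigma)$ and $\lambda\leftrightarrow\tilde\lambda:=\lambda-\sigma$.

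I would verify this by substituting $x=y+\sigma$ (and similarly shifting the dummy integration variable) in the pathwise eigenfunction equation displayed immediately after \eqref{eq:Ht}: the potential factor $(\cdot-\lambda)$ becomes $(\cdot-\tilde\lambda)$ after absorbing the constant shift, while the Brownian increments $b(\cdot)-b(t+\sigma)$ become $\tilde b(\cdot)-\tilde b(t)$. A routine check that $\tilde\varphi\in L^*[t,\infty)$ if and only if $\varphi\in L^*[t+\sigma,\infty)$ (the boundary condition $f(t)=0$ and the weighted Sobolev condition $\int (f')^2+(1+x)f^2\,dx<\infty$ both transfer under a fixed shift, since $1+y$ and $1+y+\sigma$ differ by a bounded multiplicative factor) then yields an order-preserving bijection of the spectra, so that $\Lambda_i(t+\sigma)=\tilde\Lambda_i(t)+\sigma$ for every $i$ and $t$, where $\tilde\Lambda_i$ are the ordered eigenvalues of $\wt{\mathcal{H}}_\beta^{(t)}$.

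By the remark following Theorem \ref{thm:process}, the eigenvalue process is a deterministic functional of the Brownian path on a set of full measure. Writing $\wt{\mathcal{G}}_t^{(k)}$ for the same functional applied to $\tilde b$, the previous step gives the pathwise identity $\mathcal{G}_{t+\sigma}^{(k)}-\sigma=\wt{\mathcal{G}}_t^{(k)}$ for all $t\ge 0$, and since $\tilde b\ed b$ as processes on $[0,\infty)$, we obtain $(\wt{\mathcal{G}}_t^{(k)})_{t\ge 0}\ed (\mathcal{G}_t^{(k)})_{t\ge 0}$. Subtracting $t$ from both sides yields the required stationarity of $(\mathcal{G}_t^{(k)}-t)_{t\ge 0}$. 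The main obstacle is the pathwise bookkeeping in the change-of-variable, in particular verifying that the function-space conditions transfer correctly and that the correspondence $\varphi\leftrightarrow\tilde\varphi$ is a genuine bijection at the level of ordered eigenvalues; once this is in place, the conclusion is immediate from the time-shift invariance of Brownian motion.
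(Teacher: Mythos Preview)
Your proposal is correct and follows essentially the same approach as the paper: a change of variables $x\mapsto x-\sigma$ combined with the stationarity of Brownian increments to identify the shifted eigenvalue problem with the original one. If anything, your version is slightly more careful, since you establish the pathwise identity $\mathcal{G}_{t+\sigma}^{(k)}-\sigma=\wt{\mathcal{G}}_t^{(k)}$ simultaneously for all $t$ and then invoke $\tilde b\ed b$ at the process level, which directly yields stationarity of the finite-dimensional distributions rather than just of the one-point marginals.
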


\begin{proof}
We use definition that $(\lambda, \varphi)$ is an eigenvalue/eigenfunction pair for $\Hop t$ if (\ref{eq:eveft}) is satisfied, we define the time sifted function $\psi(x-t)= \varphi(x)$ and shifted Brownian motion $w(x-t)=b(x)-b(t)$ then $\psi$ satisfies the equation
\begin{equation}
\psi'(x-t)- \psi'(0)= \frac{2}{\sqrt \beta} \psi(x-t)w(x-t)-\frac{2}{\sqrt \beta} \int_0^{x-t} \psi'(s)w(s)ds+ \int_0^{x-t}(s-(\lambda-t))\psi (s)ds.
\end{equation}
This is equivalent in distribution to $\lambda-t$ being an eigenvalue of 
\[
\mathcal{H} = - \frac{d^2}{dx^2} + x + \frac{2}{\sqrt{\beta}} w'_x \ed \mathcal{H}_\beta^{(0)}.
\] 

Therefore the lowest $k$ eigenvalues of $\Hop{t}$ shifted by $t$ have the same distribution as the lowest $k$ eigenvalue of $\mathcal{H}_\beta^{(0)}$ for all $t$ and so $\G{k}{t}-t$ is stationary.  
\end{proof}


We let $f_{k,t}\in L^*[t,\infty)$ denote the eigenfunction associated to $\Lambda_{k}(t)$ the $k$th lowest eigenvalue of $\Hop{t}$. The idea for the remainder of this section will be to approximate the eigenfunction $f_{k,t}$ by using eigenfunctions at $f_{k,t+\eps}$ and $f_{k,t-\eps}$ and replacing the starting section of the eigenfunction with just a straight line. We make the following definitions: 

For every pair $s<t$ we define two new families of functions
\begin{equation}
\label{eq:linearapprox}
\phi_{k,s,t}^{(a)}(x) = \begin{cases} 
(x-s) \frac{f_{k,t}(a)}{a-s} &s\le x < a\\
f_{k,t}(x) & x \ge  a
\end{cases}, 
\qquad
\psip{k}{a}(x)= \begin{cases}
(x-t)\frac{f_{k,s}(a)}{a-t} &t\le x < a \\
f_{k,s}(x) & x \ge a
\end{cases}.
\end{equation}
The function $\varp{k}{a}$ approximates the $k$th eigenfunction for $\Hop{s}$ by building a function from the $k$th eigenfunction of $\Hop{t}$. The function $\psip{k}{a}$ does something similar, but instead approximates the $k$th eigenfunction of $\Hop{t}$ by looking at the $k$th eigenfunction of $\Hop{s}$. See figure \ref{functionbuilding} for an illustration of how $\phi$ and $\psi$ are constructed from a function $f$.

\begin{figure}[h]
\label{functionbuilding}
\begin{center}
\includegraphics[height=2in]{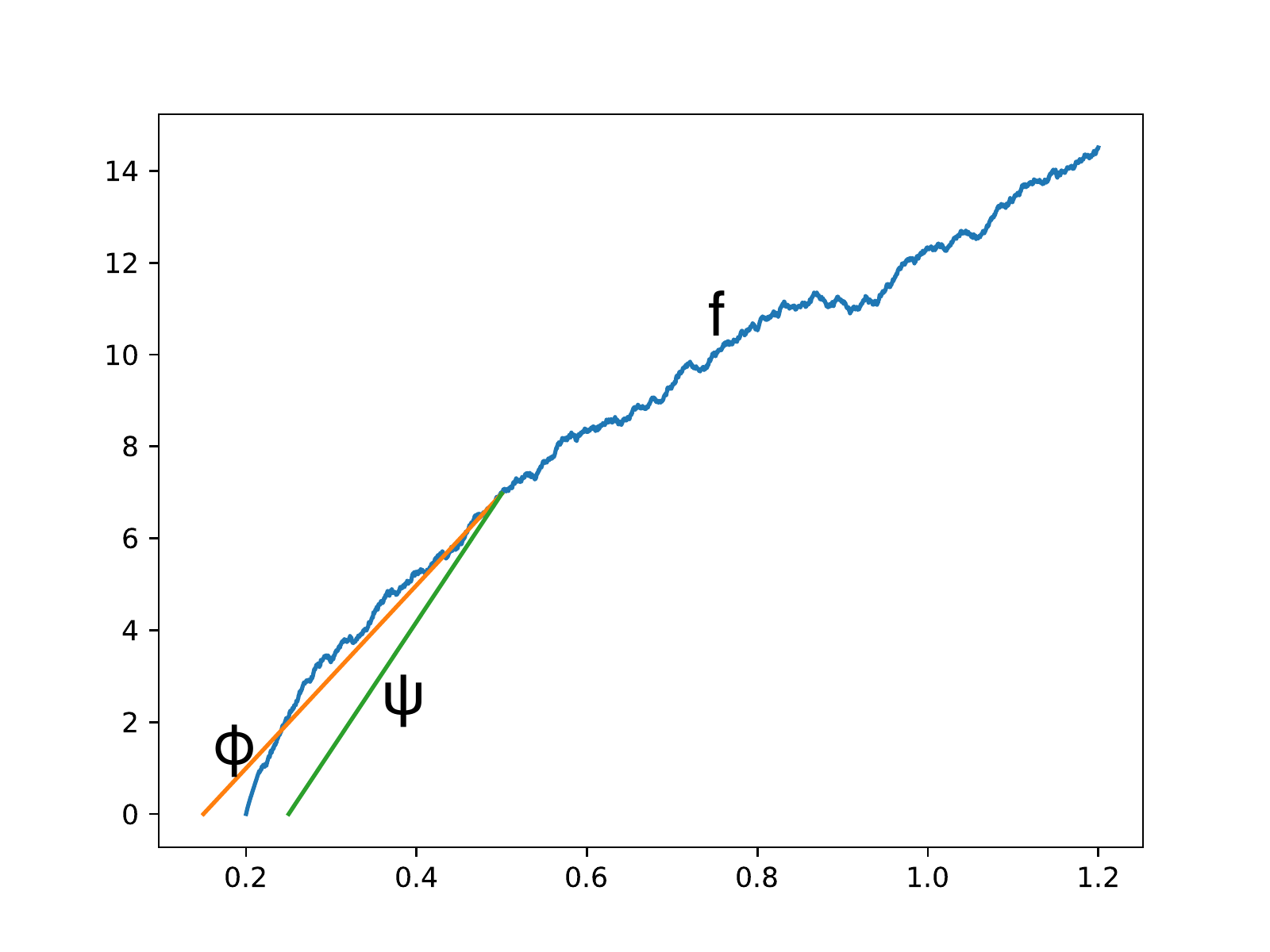}
\caption{Building $\phi$ and $\psi$ from a function $f$.}
\end{center}
\end{figure}

The idea here will be to make use of the variational characterization of the eigenvalues: 
\begin{equation}
\label{eq:characterization}
\Lambda_k(t) = \inf_{B\subset L^*_t, \dim B=k} \sup_{g\in B} \frac{\fip{ g, \Hop t g}}{\fip{ g, g }} = \fip{ f_{k,t}, \Hop t f_{k,t}}. 
\end{equation}
It follows immediately from the variational characterization that 
\begin{align*}
\Lambda_1(t) &= \fip{ f_{1,t}, \Hop t f_{1,t}} \\
& \le\  \frac{\fip{ \phi_{1,t,t+\eps}^{(a)}, \Hop t \phi_{1,t,t+\eps}^{(a)}}}{\fip{ \phi_{1,t,t+\eps}^{(a)},  \phi_{1,t,t+\eps}^{(a)}}} = \Lambda_1(t+\eps) + \text{error},
\end{align*}
with a similar bound holding using $\psi_{1,t-\eps,t}^{(a)}$ and $\Lambda_1(t-\eps)$. The remainder of this section is devoted to showing that the error is of order $\eps$ and identifying an expression for the derivative of $\Lambda_k(t)$.

The first step in showing that $\G k t$ is a differentiable process is to prove that the error is of order $\eps$. This requires several results on the eigenfunctions of the operators $\Hop t$.  The first result will be to show that the eigenfunctions are `close' to linear near their boundary, which we will need to that $\phi_{k,s,t}^{(a)}$ and $\psip{k}{a}$ are good approximations. Note that for $t>0$ we can extend $f_{k,t}$ to a function on $L^*[0,\infty)$ by taking $f_{k,t}(x)=0$ for $x<t$. This extension should be implicitly understood where necessary in the following computations.

\begin{proposition}
\label{prop:almostlinear}
	Let $\varphi_t(x)$ be an eigenfunction of $\mathcal{H}_\beta(t)$.  Suppose that $\sup_{x\in [t, x_0]}|\varphi_t'(x)- \varphi_t'(t)| < \eta_{x_0}$ for some $\eta_{x_0}>0$.  Then for every $0<\delta< 1/2$ and $x \in [t,x_0]$ there exists $C_{x_0,\delta}$, 
	\begin{align*}
	|\varphi_t(x)  - (x-t) \varphi_t'(t) | \leq C_{b,x_0,\delta}(\eta_{x_0}+\varphi_t'(t)) |x-t|^{2+\delta}.
	\end{align*}
\end{proposition}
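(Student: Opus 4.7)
The strategy is to combine the pathwise integral form of the eigenvalue equation with the almost-sure Hölder continuity of the Brownian path. First I would reduce the statement to a bound on $|\varphi_t'(s) - \varphi_t'(t)|$: since $\varphi_t(t) = 0$,
\[
\varphi_t(x) - (x-t)\varphi_t'(t) = \int_t^x \bigl(\varphi_t'(s) - \varphi_t'(t)\bigr)\, ds,
\]
so if I can establish $|\varphi_t'(s) - \varphi_t'(t)| \leq C\,M\,|s-t|^{1+\delta}$ on $[t, x_0]$ with $M := \eta_{x_0} + |\varphi_t'(t)|$, a single integration immediately produces the claimed $|x-t|^{2+\delta}$ bound.

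To get this intermediate Hölder-type estimate, I would start from the integral form of the eigenvalue equation \eqref{eq:eveft}, which reads
\[
\varphi_t'(s) - \varphi_t'(t) = \tfrac{2}{\sqrt\beta}\varphi_t(s)\bigl(b(s)-b(t)\bigr) - \tfrac{2}{\sqrt\beta}\int_t^s \varphi_t'(u)\bigl(b(u)-b(t)\bigr)du + \int_t^s (u-\lambda)\varphi_t(u)\,du,
\]
and bound each of the three terms. Under the hypothesis, $|\varphi_t'(u)| \leq M$ on $[t, x_0]$, and combined with $\varphi_t(t) = 0$ this yields $|\varphi_t(u)| \leq M(u-t)$ on the same interval. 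Set $\delta' = 1/2 - \delta \in (0, 1/2)$; on a set of full Brownian measure there is a constant $K_b(x_0, \delta')$ such that $|b(u)-b(t)| \leq K_b\,|u-t|^{1/2 - \delta'}$ uniformly in $u \in [t, x_0]$.

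With these pointwise bounds in hand, the first two (stochastic) terms above are each dominated by a multiple of $K_b\,M\,|s-t|^{3/2-\delta'} = K_b\,M\,|s-t|^{1+\delta}$, while the deterministic third term contributes at most $\tfrac12(x_0 + |\lambda|)\,M\,|s-t|^2$, which on the bounded interval $[t, x_0]$ is further dominated by a multiple of $|s-t|^{1+\delta}$. Summing these contributions yields the desired Hölder estimate for $\varphi_t' - \varphi_t'(t)$ and integrating from $t$ to $x$ completes the proof, with a constant absorbing $K_b$, $\lambda$, $x_0$, and $\delta$.

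I do not expect a substantive obstacle: the argument is pathwise and uses only the integral equation, the hypothesis on the oscillation of $\varphi_t'$, and standard Brownian Hölder regularity. The main technical care is simply bookkeeping, namely ensuring that the factor $\eta_{x_0} + \varphi_t'(t)$ can be factored cleanly out of all three terms (which it can, since every occurrence of $\varphi_t$ or $\varphi_t'$ is dominated by $M$) and that the $\lambda$-dependence gets absorbed into the final constant $C_{b, x_0, \delta}$.
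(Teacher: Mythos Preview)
Your proposal is correct and follows essentially the same route as the paper: bound $\varphi_t$ linearly via the hypothesis on $\varphi_t'$, feed this and the Brownian H\"older bound into the integral equation to get $|\varphi_t'(s)-\varphi_t'(t)|\le C\,M\,|s-t|^{1+\delta}$, then pass to the final estimate. The only cosmetic difference is that the paper invokes the Mean Value Theorem twice (for the linear bound on $\varphi_t$ and for the final step) where you integrate directly, and the paper parametrizes the H\"older exponent as $\delta$ rather than $1/2-\delta'$; neither changes the substance.
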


\begin{proof}
We begin with two bounds. \\
\noindent \emph{Bound on Brownian Motion:}  Using that Brownian Motion $\alpha$-Holder continuous for $\alpha<1/2$ and for all $t>0$ we have that almost surely for all BM paths and $x,t<x_0$ 
\begin{align}
\label{eq:BMbnd}
|b(x)-b(t)| \leq C_{x_0,\delta} |x-t|^{\delta},\text{ for } 0 < \delta <1/2. 
\end{align} 
\emph{Bound for $\varphi_t(x)$:}  We apply the Mean Value Theorem to $\varphi_t(x)$ to get that for some $r \in (t,x)$
\begin{align}
\varphi_t(x) = \varphi_t'(r) \cdot (x-t) \leq (\eta_{x_0} +\varphi_t'(t)) (x-t). \label{eq:phibnd} 
\end{align}

	Now suppose that $\varphi_t(x)$ is the eigenfunction corresponding to eigenvalue $\lambda$. Then 
	\begin{align*}
	\varphi_t'(x)-\varphi_t'(t) = \frac{2}{\sqrt{\beta}} \varphi_t(x)(b_x-b_t) - \frac{2}{\sqrt{\beta}} \int_t^x (b_y-b_t) \varphi_t'(y)dy + \int_t^x (y- \lambda)\varphi_t(y)dy.
	\end{align*}
Applying the two bounds given at the beginning of this proof (\eqref{eq:BMbnd} and \eqref{eq:phibnd}) we obtain that 
	\begin{align*}
	|\varphi_t'(x)-\varphi_t'(t)| \leq C_{x_0,\delta} (\eta_{x_0}+\varphi_t'(t))|x-t|^{1+\delta}
	\end{align*}
	We use the mean value theorem again to get $\varphi_t(x)-(x-t) \varphi_t'(t) = (\varphi_t'(r)- \varphi_t'(t))(x-t)$ which gives us the final bound 
	\begin{align*}
		|\varphi_t(x)  - (x-t) \varphi_t'(t) | \leq C_{x_0,\delta}(\eta_{x_0}+\varphi_t'(t)) |x-t|^{2+\delta}. 
	\end{align*}
\end{proof}

\begin{proposition}
\label{prop:finitediff}
Let $[c,d]$ be any interval, then for any $\gamma<1/2$ and $0<\eps< \frac{\rho}{t-s}$ there exists a constant $C_{\rho, \gamma}$ depending only on $\gamma$ and $\rho$ such that for any $s<t\in [c,d]$ and $i = 1,...,k$ we have 
\begin{equation}
\label{eq:finitediff}
\frac{\eps}{1+\eps}(f'_{i,s}(s))^2 - C_{\rho,\gamma}\eps^{1+\gamma}(t-s)^\gamma \le \frac{\Lambda_{i}(t)- \Lambda_i(s)}{t-s} \le \frac{1+\eps}{\eps}(f'_{i,t}(t))^2 + C_{\rho,\gamma}\eps^{1+\gamma}(t-s)^\gamma.
\end{equation}
\end{proposition}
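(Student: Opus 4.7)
The plan is to use the Courant--Fischer variational characterization in \eqref{eq:characterization} with $k$-dimensional trial subspaces built from eigenfunctions at the other endpoint and modified near the boundary. The natural choice is $a := t+\eps(t-s)$, so that $a-t=\eps(t-s)$, $a-s=(1+\eps)(t-s)$, and the hypothesis $\eps<\rho/(t-s)$ keeps $a$ bounded (in $[c,d+\rho]$), ensuring that Proposition \ref{prop:almostlinear} applies uniformly on the regions of interest with constants depending only on $\rho$ and $\gamma$.

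For the upper bound I would take the trial subspace $B_t:=\textup{Span}\,\{\psip{1}{a},\ldots,\psip{i}{a}\}\subset L^*[t,\infty)$ and apply Courant--Fischer, $\Lambda_i(t)\le\sup_{g\in B_t}\fip{g,\Hop{t}g}/\fip{g,g}$. Since $\psip{j}{a}$ equals $f_{j,s}$ on $[a,\infty)$ and is linear on $[t,a]$, the quadratic form $\fip{\psip{j}{a},\Hop{t}\psip{j}{a}}$ differs from $\Lambda_j(s)=\fip{f_{j,s},\Hop{s}f_{j,s}}$ only through contributions on the short intervals $[s,a]$ (subtracted) and $[t,a]$ (added). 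The dominant piece is the Dirichlet energy $\int_t^a((\psip{j}{a})')^2dx=f_{j,s}(a)^2/(a-t)$; inserting the almost-linear expansion $f_{j,s}(a)=(a-s)f_{j,s}'(s)+O((a-s)^{2+\delta})$ from Proposition \ref{prop:almostlinear} and subtracting the analogous $(a-s)(f_{j,s}'(s))^2$ produces the main term with the coefficient $(1+\eps)/\eps$. The lower-order pieces (the $xg^2$ contribution on $[t,a]$ and the stochastic piece of the form, handled via $\alpha$-Hölder continuity of $b$ for any $\alpha<1/2$) are then $O(\eps^{1+\gamma}(t-s)^{1+\gamma})$, and combined with the near-orthogonality estimate $\fip{\psip{j}{a},\psip{l}{a}}=\delta_{jl}+O((t-s)^3)$, passing through min-max on the resulting perturbation of $\mathrm{diag}(\Lambda_1(s),\ldots,\Lambda_i(s))$ yields the upper half of \eqref{eq:finitediff} after dividing by $t-s$.

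For the lower bound I would run the mirror construction on $B_s:=\textup{Span}\,\{\phi_{1,s,t}^{(a)},\ldots,\phi_{i,s,t}^{(a)}\}\subset L^*[s,\infty)$, which via Courant--Fischer gives an upper bound on $\Lambda_i(s)$. The analogous calculation replacing $\psi$ by $\phi$, $f_{j,s}$ by $f_{j,t}$, and $f_{j,s}(a)^2/(a-t)$ by $f_{j,t}(a)^2/(a-s)$ produces a main term with opposite sign carrying the coefficient $\eps/(1+\eps)$; after rearrangement this gives the lower half of \eqref{eq:finitediff}.

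The main obstacle is the error bookkeeping needed to obtain the precise rate $\eps^{1+\gamma}(t-s)^\gamma$ after dividing by $t-s$. Three sources of error must each be shown to be at most $\eps^{1+\gamma}(t-s)^{1+\gamma}$ before normalization: (i) the deviation of the eigenfunctions from linearity near the boundary, which is controlled by Proposition \ref{prop:almostlinear} with the strongest available Hölder exponent $\delta<1/2$; (ii) the stochastic term in the quadratic form on $[t,a]$ (respectively $[s,a]$), which after integration by parts reduces to an estimate of $\int g\,g'\,(b-b(\cdot))\,dx$ that combines $\alpha$-Hölder continuity of $b$ with the almost-linear bounds on $g'$; and (iii) the deviation of the Rayleigh-quotient Gram matrix on $B_t$ (or $B_s$) from the identity, which requires bounding both the normalization $\|\psip{j}{a}\|_2^2-1$ and the off-diagonal overlaps without introducing inverse spectral-gap factors — the latter is safe because the $f_{j,s}$ are a.s.\ orthonormal and the gaps among $\Lambda_1(s),\dots,\Lambda_k(s)$ are strictly positive. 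Polynomial factors in $a\le d+\rho$ are absorbed into $C_{\rho,\gamma}$.
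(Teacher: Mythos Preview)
Your proposal is correct and follows essentially the same route as the paper: the same trial functions $\psip{j}{a}$ and $\varp{j}{a}$ with the same choice $a=t+\eps(t-s)$, the same identification of the Dirichlet energy $f_{j,s}(a)^2/(a-t)$ (resp.\ $f_{j,t}(a)^2/(a-s)$) as the dominant term via Proposition~\ref{prop:almostlinear}, the same use of H\"older continuity of $b$ for the stochastic piece, and the same near-orthogonality/Gram-matrix argument (exploiting simplicity of the eigenvalues) for $i>1$. The only cosmetic difference is that the paper first treats $i=1$ separately and then redoes the computation inside the Courant--Fischer supremum, whereas you fold both into one pass.
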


\begin{remark}
\label{rmk:derivative}
Observe that the choise $\eps = (t-s)^{-\delta}$ meets the conditions of the theorem and converges to $\infty$ as $t\to s$, therefore if we can show that $\lim_{t\to s} f'_{i,t}(t)= f'_{i,s}(s)$ this will be enough to show that the process is differentiable with the derivative at $t$ being given by $(f'_{i,t}(t))^2$. 
\end{remark}

\begin{corollary}
\label{cor:lambdacont}
The process $\G{t}{k}$ is continuous as a function of $t$. 
\end{corollary}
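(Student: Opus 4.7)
Plan:

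The plan is to combine monotonicity of the eigenvalues in $t$ with the finite difference estimate of Proposition \ref{prop:finitediff}. I would first observe that $t\mapsto \Lambda_i(t)$ is non-decreasing: for $s<t$, any $g\in L^*[t,\infty)$ extends by zero to an element of $L^*[s,\infty)$ with both $\fip{g,\mathcal{H}_\beta g}$ and $\|g\|_2$ preserved, so the variational characterization \eqref{eq:characterization} gives $\Lambda_i(s)\le \Lambda_i(t)$. In particular $\Lambda_i$ can only have jump discontinuities (at most countably many), and it suffices to rule out one-sided jumps at an arbitrary point $t_0$.

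Next I would establish left-continuity at $t_0$ by applying Proposition \ref{prop:finitediff} with $s=t_0-h$, $t=t_0$, $\eps=1$, and any $\gamma\in(0,1/2)$. The upper bound, multiplied by $h$, gives
\[
0 \le \Lambda_i(t_0)-\Lambda_i(t_0-h) \le 2h\,(f'_{i,t_0}(t_0))^2 + Ch^{1+\gamma},
\]
where the left inequality is monotonicity. Since the right-hand side vanishes as $h\to 0^+$, we get $\Lambda_i(t_0^-)=\Lambda_i(t_0)$.

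The main obstacle is right-continuity, since the analogous upper bound with $s=t_0$, $t=t_0+h$ involves $(f'_{i,t_0+h}(t_0+h))^2$, for which I have no a priori bound. I would argue by contradiction: assume $\Lambda_i$ has a right-jump of size $\delta>0$ at $t_0$, so $\Lambda_i(t_0+h)-\Lambda_i(t_0)\ge \delta$ for all $h>0$. Combining the upper bound of Proposition \ref{prop:finitediff} on $(t_0,t_0+h)$ with this assumption forces
\[
(f'_{i,t_0+h}(t_0+h))^2 \ge \frac{\delta}{4h}\quad\text{for all sufficiently small }h.
\]
Feeding this back into the \emph{lower} bound of the same proposition, now applied to $(t_0+h,t_0+2h)$, yields
\[
\Lambda_i(t_0+2h)-\Lambda_i(t_0+h)\ \ge\ \tfrac{h}{2}\cdot\tfrac{\delta}{4h} - Ch^{1+\gamma}\ \ge\ \tfrac{\delta}{16}
\]
for all sufficiently small $h$. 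Iterating along the dyadic scale $h_n=H/2^{n+1}$ for a fixed $H>0$, I would then obtain
\[
\Lambda_i(t_0+H/2^n)-\Lambda_i(t_0+H/2^{n+1})\ \ge\ \tfrac{\delta}{16}
\]
for all $n\ge N_0$. Telescoping over $n=N_0,\dots,N$ gives $\Lambda_i(t_0+H/2^{N_0})-\Lambda_i(t_0+H/2^{N+1})\ge (N-N_0+1)\delta/16\to\infty$, contradicting the finiteness of $\Lambda_i$ in a neighborhood of $t_0$. Combined with left-continuity, this rules out all jump discontinuities and completes the proof.
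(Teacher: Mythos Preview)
Your argument is correct and rests on the same tool as the paper---multiplying the inequality in Proposition~\ref{prop:finitediff} through by $(t-s)$---but you go further than the paper does. The paper's proof is the single sentence ``simply multiply through by $(t-s)$'', which tacitly assumes that the factor $(f'_{i,\cdot}(\cdot))^2$ appearing at the \emph{moving} endpoint stays bounded as $t-s\to 0$. You noticed this gap and closed it with a clean bootstrapping: from a hypothetical right-jump you extract a lower bound $(f'_{i,t_0+h}(t_0+h))^2\gtrsim \delta/h$ via the upper inequality in \eqref{eq:finitediff}, feed it into the lower inequality on the adjacent dyadic interval, and telescope to force unbounded growth of $\Lambda_i$ near $t_0$, a contradiction. (It is worth noting that the \emph{proof} of Proposition~\ref{prop:finitediff} actually derives the upper bound in terms of $(f'_{i,s}(s))^2$ and the lower in terms of $(f'_{i,t}(t))^2$---the reverse of what is stated---and with both versions available continuity would indeed be immediate in the way the paper suggests; your argument has the virtue of working from the proposition exactly as stated.) The monotonicity observation via zero-extension and the variational characterization is correct and gives a tidy replacement for the lower bound when establishing left-continuity.
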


This follows immediately from the inequality in the previous Proposition \ref{prop:finitediff}, simply multiply through by $(t-s)$.

\begin{proof}[Proof of Proposition \ref{prop:finitediff} for $k=1$]
The idea is to use the variational characterization of our eigenvalues to get upper bounds using $\psip{1}{a}$ and $\varp{1}{a}$. In particular we have that
\[
\Lambda_1(t) \le \frac{\fip{ \psip{1}{a}, \Hop{t} \psip{1}{a}}}{\|\psip{1}{a}\|_2^2}, \quad \text{ and } \quad \Lambda_1(s) \le \frac{\fip{ \varp{1}{a}, \Hop{t} \varp{1}{a}}}{\|\psip{1}{a}\|_2^2}.
\]
Before continuing we show that $\|\psip{k}{a}\|_2^2$ is close enough to 1 that it may be neglected for the remainder of the calculations. In particular we have
\begin{align*}
\|\psip{k}{a}\|_2^2 &= \|f_{k,s}\|_2^2 + \int_t^a (x-t)^2\frac{f_{k,s}^2(a)}{(a-t)^2} dx - \int_s^a f_{k,s}^2(x)dx 
\end{align*}
Applying Proposition \ref{prop:almostlinear} we obtain that 
\begin{align}
\label{eq:finitediffnorm}
|\|\psip{k}{a}\|_2^2-1|& \le  \big((a-t)(a-s)^2+(a-s)^3\big)\frac{(f_{k,s}')^2(a)}{3}  +\tilde C_{a,\gamma}(a-s)^{3+ \gamma}.
\end{align}
Taking $a = t+ \eps(t-s)$ we obtain that $|\||\psip{k}{a}\|_2^{-2} -1| \le C(t-s)^3$. These errors may be bounded using the constant $C_{\rho,\gamma}\eps^{1+\gamma}(t-s)^\gamma$ term in equation \eqref{eq:finitediff}. Because of this we will neglect the normalization for the remainder of the argument.

We can then compute the following: 
\begin{align*}
\fip{ \psip{1}{a}, \Hop{t} \psip{1}{a}} - \Lambda_1(s) & = \int_t^a \psip{1}{a}(x)\Hop{t} \psip{1}{a}(x)dx- \int_s^a f_{1,s}(x)\Hop{s} f_{1,s}(x)dx.
\end{align*}
We show that for $a=t+ \eps(t-s)$
\begin{align}
\label{eq:finitediffa}
0\le \fip{ \psip{1}{a}, \Hop{t} \psip{1}{a}} - \Lambda_1(s) \le (f_{1,s}')^2(s)(t-s) \frac{1+\eps}{\eps}+M(1+\eps)^{1+\gamma}(t-s)^{1+\gamma}
\end{align}
In order to do this we must bound $\int_t^a \psip{1}{a}(x)\Hop{t} \psip{1}{a}(x)dx$ above and $\int_s^a f_{1,s}(x)\Hop{s} f_{1,s}(x)dx$ below.  We use Holder continuity of Brownian motion to say that for $c<y<x<a$ and $\gamma<1/2$ fixed with probability 1 there exists a constant $C$ such that 
\[
|b_x-b_y|\le C_{a,\gamma}|x-y|^\gamma.
\]
This gives us 
\begin{align*}
\int_t^a \psip{1}{a}(x)\Hop{t} \psip{1}{a}(x)dx & = \int_t^a \left( \frac{f_{1,s}(a)}{a-t}\right)^2 \left[1+ x(x-t)^2 + (b_x-b_t)(x-t)\right]dx\\
&\le f_{1,s}^2(a)\left( \frac{1}{a-t}+ \frac{(a-t)^2}{4}+t \frac{(a-t)}{3}+ \frac{C_{a,\gamma}(a-t)^{\gamma}}{2+\gamma}\right).
\end{align*}
An application of proposition \ref{prop:almostlinear} allows us to write $f_{1,s}(x) \le (x-s)f_{1,s}'(s)+ C_{a,\gamma}(x-s)^{1+\gamma}$, which leads us to 
\begin{equation}
\label{eq:finitediff1}
\int_t^a \psip{1}{a}(x)\Hop{t} \psip{1}{a}(x)dx \le (f_{1,s}')^2(s)\frac{(a-s)^2}{a-t}+ M_{a,\gamma}(a-t)^{\gamma}(a-s)^2.
\end{equation}
Before continuing with the next bound we make the following observation
\[
\left|\int_s^a (x-s)^{1+\gamma} \Hop s (x-s) ds \right| \le C_{a,\gamma} (a-s)^{\gamma+1},
\]
where $C$ is a random constant depending on the interval $[c,d]$ and the choice of $a$ and $\gamma$. From this we can check that Proposition \ref{prop:almostlinear} implies that
\begin{equation}
\label{eq:finitediff2}
\left| \int_s^a f_{1,s}(x)\Hop{s} f_{1,s}(x)dx-\int_s^a (x-s)f'_{1,s}(s)\Hop{s} (x-s)f'_{1,s}(s)dx\right| \le C_{a,\gamma}(a-s)^{1+\gamma}.
\end{equation}
We finish the lower bound on $\int_s^a f_{1,s}(x)\Hop{s} f_{1,s}(x)dx$ by computing 
\begin{equation}
\label{eq:finitediff3}
\int_s^a (x-s)f'_{1,s}(s)\Hop{s} (x-s)f'_{1,s}(s)dx \ge (f_{1,s}')^2(s)(a-s)- M_{a,\gamma}(a-s)^{2+\gamma}.
\end{equation}
Putting together (\ref{eq:finitediff1}), (\ref{eq:finitediff2}), and (\ref{eq:finitediff3}) we are led to the conclusion that for all $a=t+ \eps(t-s)$ we have 
\begin{equation}
\Lambda_1(t)-\Lambda_1(s) \le (f_{1,s}')^2(s)(t-s) \frac{1+\eps}{\eps}+M\eps^{1+\gamma}(t-s)^{1+\gamma}.
\end{equation}
This leads us one of the inequalities in Propostion \ref{prop:finitediff} for $i =1$. Similar techniques may be used to study $\varp{1}{a}$. These lead to the inequality 
\begin{equation}
\Lambda_1(t)-\Lambda_1(s) \ge  (f_{1,t}')^2(t)(t-s) \frac{\eps}{1+\eps}-M\eps^{1+\gamma}(t-s)^{1+\gamma}.
\end{equation}

\end{proof}

\begin{proof}[Proof of Proposition \ref{prop:finitediff} for $k>1$]
The idea here will be similar to the case where $k=1$, but we now have a more complicated variational characterization to work with which leads to further terms that need to be considered.  We start by introducing the Courant-Fisher characterization of the eigenvalues which is given by 
\begin{equation}
\Lambda_k(t) = \inf_{B\subset L^*_t, \dim B=k} \sup_{g\in B} \frac{\fip{ g, \Hop t g}}{\fip{ g, g }}.
\end{equation}
From this characterization we have 
\[
\Lambda_k(t) \le \sup_{g\in B_s}\frac{\fip{ g, \Hop t g}}{\fip{ g, g }}, \qquad \text{where } \quad B_s = \Span \{\psip{1}{a},...,\psip{k}{a}\}.
\]
We make the following observations: 
For $i \ne j\le k$ 
\[
\fip{ \psip{i}{a},  \psip{j}{a} } = \int_t^a \frac{(x-t)^2}{(a-t)^2}f_{i,s}(a)f_{j,s}(a)dx - \int_s^a f_{i,s}(x)f_{j,s}(x)dx
\] 
And so an application of Proposition \ref{prop:almostlinear} gives us that 
\begin{equation}
\label{eq:finitediff4}
|\fip{ \psip{i}{a},  \psip{j}{a} }| \le \frac{1}{3}(t-s)(a-s)^2f'_{i,s}(s)f'_{j,s}(s) + C(a-s)^{3+\gamma}. 
\end{equation}
Now observe that for all $j$, using bounds identical to those used to prove \eqref{eq:finitediffa}, we can show that 
\begin{equation}
0 \le \fip{ \psip{j}{a}, \Hop{t} \psip{j}{a}} - \Lambda_j(s) \ \le\  (f'_{j,s})^2(s)\frac{(a-s)^2}{a-t}  + M_{a,\gamma}(a-s)^{1+\gamma}.
\end{equation}
Further we get that for $g= c_1 \psip{1}{a} + \cdots + c_k \psip{k}{a} \in B_s$ we have 
\begin{align*}
\fip{ g , \Hop{t} g} &\le \sum_{j=1}^k c_j^2 \left(  \Lambda_j(s) + (f'_{j,s})^2(s)\frac{(a-s)(t-s)}{a-t}  + M_{a,\gamma}(a-s)^{1+\gamma}\right) \\
& \hspace{2cm}+2\Lambda_k(t)  \sum_{j_1<j_2}c_{j_1}c_{j_2}|\fip{ \psip{j_1}{a},\psip{j_2}{a}}|\\
\end{align*} 
Taking $a = t+\eps(t-s)$ and applying the bound in \eqref{eq:finitediff4} we are led to
\begin{align*}
\Lambda_k(t) &\le \sup_{c_1,...,c_n} \|g\|_2^{-2}  \sum_{j=1}^k c_j^2 \left(  \Lambda_j(s) + (f'_{j,s})^2(s)\frac{(1+\eps)(t-s)}{\eps}  + M_{a,\gamma}\big((1+\eps)(t-s)\big)^{1+\gamma}\right) \\
& \hspace{2cm}+2\Lambda_k(t)  \sum_{j_1<j_2}c_{j_1}c_{j_2}(1+\eps)^2(t-s)^3(f'_{j_1,s}(s)f'_{j_2,s}(s) + C) 
\end{align*}
For $t$ sufficiently close to $s$ this is maximal for $c_j =0$ for $j \ne k$ and $c_k = \|\psip{k}{t+\eps(t-s)}\|_2^{-1}$. This is because the $\Lambda_1(s)< \Lambda_2(s)< \cdots <\Lambda_k(s)$ are fixed and distinct with probability 1, but all the remaining terms (except possibly the error term on the first line) converge to $0$ as $t\to s$. The error term is identical in all terms so does not change the optimization. Therefore for some $t$ sufficiently close to $s$ $\Lambda_k(s) + (f'_{k,s})^2(s)\frac{(1+\eps)(t-s)}{\eps}  + M_{a,\gamma}\big((1+\eps)(t-s)\big)^{1+\gamma}$ will be the dominant term and so the right hand side is maximized when all of the $c_i$ are 0 except for $c_k$. By previous argument in line \eqref{eq:finitediffnorm} we have that $c_k^2= 1+ O(t-s)^3$ and so the error we obtain by replacing $c_k$ with $1$ may be neglected.  This gives us that
\[
\Lambda_k(t) - \Lambda_k(s) \le (f'_{k,s})^2(s)\frac{(1+\eps)(t-s)}{\eps}  + M_{a,\gamma}\big((1+\eps)(t-s)\big)^{1+\gamma} ,
\]
which complete the upper bound in the proposition. To complete the lower bound we perform a similar analysis with $B_s= \Span \{\varp{1}{a},...,\varp{k}{a}\}$.
\end{proof}

\begin{lemma}
\label{lemma:unifconv}
The eigenfunctions $f_{1,s},...,f_{k,s}$ of $\Hop t$ converge uniformly on compact subsets to the eigenfunctions $f_{1,t},...,f_{k,t}$ of $\Hop {t}$ as $s \to t$.
\end{lemma}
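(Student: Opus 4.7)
The strategy is an Arzel\`a--Ascoli compactness argument. Fix a sequence $s_n \to t$; it suffices to show that every subsequence has a further subsequence along which $f_{j,s_n}$ converges uniformly on compact subsets of $[t,\infty)$ to $f_{j,t}$ for each $j=1,\ldots,k$, where we extend each $f_{j,s_n}$ by zero on $[t,s_n]$ when $s_n>t$ and restrict to $[t,\infty)$ when $s_n<t$. Continuity of $s\mapsto \Lambda_j(s)$ from Corollary \ref{cor:lambdacont} will supply the correct limiting eigenvalue, and the simplicity of the spectrum from Theorem \ref{thm:process}'s underlying hypothesis will pin down the limiting eigenfunction up to a sign convention.

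The first step is to derive a priori bounds on $f_{j,s_n}$ that are uniform in $n$. Pairing the eigenvalue equation against $f_{j,s_n}$ in the quadratic form associated to $\Hop{s_n}$ and integrating by parts gives
\[
\int_{s_n}^\infty \bigl((f_{j,s_n}')^2 + x f_{j,s_n}^2\bigr)\,dx + \tfrac{2}{\sqrt{\beta}} \int_{s_n}^\infty f_{j,s_n}^2\,db = \Lambda_j(s_n).
\]
Combined with the standard control on the stochastic integral established in \cite{RRV}, this yields a uniform weighted $H^1$ estimate
\[
\int_{s_n}^\infty \bigl((f_{j,s_n}')^2 + (1+|x|)\, f_{j,s_n}^2\bigr)\,dx \le C,
\]
where $C$ depends only on the Brownian path, on $k$, and on a compact interval containing the $s_n$. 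Inserting these bounds into the integrated eigenvalue equation (\ref{eq:eveft}) and using Cauchy--Schwarz together with the local H\"older continuity of $b$ gives a uniform bound on $\sup_{x\in[t,K]} |f_{j,s_n}'(x)|$ on every compact $[t,K]$; in particular the boundary value $f_{j,s_n}'(s_n)$ remains bounded in $n$. The family $\{f_{j,s_n}\}$ is thus uniformly equicontinuous on each compact interval.

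Arzel\`a--Ascoli now extracts, along a diagonal subsequence, limits $g_1,\ldots,g_k$ with $f_{j,s_n}\to g_j$ and $f_{j,s_n}'\to g_j'$ uniformly on compacts. Passing to the limit in (\ref{eq:eveft}), using continuity of $b$ and of $\Lambda_j(\cdot)$, identifies $g_j$ as an eigenfunction of $\Hop{t}$ with eigenvalue $\Lambda_j(t)$. The weighted $L^2$ bound supplies a uniform tail estimate $\int_K^\infty f_{j,s_n}^2\,dx \le C/K$, so $\|g_j\|_2 = 1$ and the orthogonality relations $\langle f_{i,s_n},f_{j,s_n}\rangle = \delta_{ij}$ survive in the limit. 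Since $\Lambda_1(t) < \cdots < \Lambda_k(t)$ are simple, each $g_j$ must equal $\pm f_{j,t}$, and a fixed sign convention (for instance $f_{j,s}'(s)>0$) selects the correct sign. Because every subsequence has a further subsequence converging to $f_{j,t}$, the full sequence converges.

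The main obstacle is obtaining uniform control simultaneously at the moving boundary and at infinity. The boundary is delicate because the domains of $f_{j,s_n}$ vary with $n$; extension by zero below $s_n$ combined with Proposition \ref{prop:almostlinear} prevents the boundary derivative from blowing up, while the weighted $L^2$ estimate derived from the quadratic form (together with the continuity of $\Lambda_j(s)$) handles the tail, ensuring that orthonormality and the eigenvalue equation both survive the passage to the limit.
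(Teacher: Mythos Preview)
Your argument is correct and follows the same overall skeleton as the paper—extract subsequential limits by compactness, identify each limit as an eigenfunction of $\Hop{t}$ with the right eigenvalue, then invoke simplicity of the spectrum—but the two proofs differ in how each step is executed.

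The paper does not work with the eigenfunctions $f_{j,s}$ directly. Instead it runs the compactness argument on the approximating functions $\varp{j}{a}$, $\psip{j}{a}$ of (\ref{eq:linearapprox}), which it has already analysed in the proof of Proposition~\ref{prop:finitediff}, and it outsources the precompactness step to Fact~2.2 of \cite{RRV} rather than deriving the uniform weighted $H^1$ bound and the equicontinuity of $f_{j,s}'$ by hand. For the identification step the paper uses the variational derivative $\frac{d}{d\eps}\fip{g_j+\eps h,\Hop{t}(g_j+\eps h)}\big|_{\eps=0}$ to conclude that the limit is an eigenfunction, whereas you pass to the limit in the integrated equation (\ref{eq:eveft}).

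What your route buys is self-containment: everything is reduced to the quadratic-form bound, Cauchy--Schwarz, and the local H\"older continuity of $b$, with no appeal to the specific test functions (\ref{eq:linearapprox}) or to an external compactness fact. What the paper's route buys is economy, since the estimates on $\varp{j}{a}$ and $\psip{j}{a}$ have already been done and Fact~2.2 of \cite{RRV} packages the compactness in the weighted space $L^*$. One small point to watch in your write-up: when $s_n>t$ and you extend by zero, the derivative of the extension has a jump at $s_n$, so the uniform convergence of $f_{j,s_n}'$ should be asserted on compact subsets of $(t,\infty)$; this is all that is needed to pass to the limit in (\ref{eq:eveft}), and the Dirichlet condition $g_j(t)=0$ follows separately from $f_{j,s_n}(s_n)=0$.
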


\begin{proof}
We again reuse the notion and approximating functions introduced in equation (\ref{eq:linearapprox}). We will show that the proposition holds for $s\searrow t$ by using the functions $\varp{k}{a}$. One can show the identical result for $s\nearrow t$ by instead using the functions $\psip{k}{a}$. We consider families of functions of the form
\[
( \varp{1}{t+\eps(t-s)}, \varp{2}{t+\eps(t-s)}, ... ,\varp{k}{t+\eps(t-s)}).
\]
From the proof of \ref{prop:finitediff} we get that $\fip{ \varp{j}{t+\eps(t-s)}, \Hop {s}\varp{j}{t+\eps(t-s)}} \to \Lambda_j(s)$  as $s \searrow t$.

We apply fact 2.2 from \cite{RRV} to get that there exists a subsequence $s_j \searrow t$ and functions $(g_1,...,g_k)$ such that 
\[
( \varphi_{1,s_j,t}^{(t+\eps(t-s_j))}, \varphi_{2,s_j,t}^{(t+\eps(t-s_j))}, ... , \varphi_{k,s_j,t}^{(t+\eps(t-s_j))})\to (g_1,...,g_k)
\]
uniformly on compact subsets in $L^2$ and weakly in $H^1$. It remains to be shown that $(g_1,...,g_k)=(f_{1,t},...,f_{k,t})$ the eigenfunctions of $\Hop{t}$. To complete the picture we use the variational derivative characterization $\frac{d}{d\eps}\fip{ g_j+\eps h, \Hop{t} (g_j+\eps h)}|_{\eps=0} $ to get that $g_j$ satisfies $\Hop{t} g_j = \tilde \Lambda_j g_j$ for some $\tilde \Lambda_j$ and so $g_j$ is an eigenfunction of $\Hop{t}$. The strict ordering of the eigenvalues is enough to complete the picture and give $\tilde \Lambda_j = \Lambda_j(t)$. It follows that $g_j = f_{j,t}$. Therefore we conclude that we in fact have 
\[
( \varphi_{1,s_j,t}^{(t+\eps(t-s_j))}, \varphi_{2,s_j,t}^{(t+\eps(t-s_j))}, ... , \varphi_{k,s_j,t}^{(t+\eps(t-s_j))})\to (f_{1,t},...,f_{k,t})
\]
uniformly on compact subsets in $L^2$ and weakly in $H^1$.

\end{proof}

This weak convergence in $H^1$ suggests that we should have convergence of the derivatives $f_{j,t}'(t)\to f_{j,t_0}'(t_0)$ as $t\to t_0$, and indeed by making use of the fact that the eigenfunctions are almost linear near the boundary point this can be shown. In particular if the eigenfunctions are approximately linear near their endpoint then convergence on compact subsets will imply that the derivatives converge at the end points. 

\begin{lemma}
For all $t_0$, and any $j=1,...,k$ we have $\lim_{t \to t_0} f_{j,t}'(t) = f_{j,t_0}'(t_0)$.
\end{lemma}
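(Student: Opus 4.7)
The plan is to approximate the boundary derivative $f_{j,t}'(t)$ by the finite difference $\frac{f_{j,t}(a)}{a-t}$ for some fixed $a$ just above $t_0$, and to use the almost-linear estimate of Proposition \ref{prop:almostlinear} to control the approximation error uniformly in $t$. Dividing the conclusion of that proposition by $(a-t)$ gives
\begin{equation*}
\left| f_{j,t}'(t) - \frac{f_{j,t}(a)}{a-t} \right| \leq C_{x_0,\delta}(\eta_{x_0}+f_{j,t}'(t))(a-t)^{1+\delta},
\end{equation*}
while Lemma \ref{lemma:unifconv} yields $\frac{f_{j,t}(a)}{a-t} \to \frac{f_{j,t_0}(a)}{a-t_0}$ as $t \to t_0$ for any fixed $a$. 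A triangle inequality
\begin{equation*}
|f_{j,t}'(t) - f_{j,t_0}'(t_0)| \leq \left|f_{j,t}'(t) - \frac{f_{j,t}(a)}{a-t}\right| + \left|\frac{f_{j,t}(a)}{a-t} - \frac{f_{j,t_0}(a)}{a-t_0}\right| + \left|\frac{f_{j,t_0}(a)}{a-t_0} - f_{j,t_0}'(t_0)\right|,
\end{equation*}
together with first letting $t \to t_0$ and then $a \searrow t_0$, will then close the proof, provided the constant in the first estimate can be chosen uniformly in $t$.

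The main obstacle is exactly this uniformity: the right-hand side of Proposition \ref{prop:almostlinear} involves $\eta_{x_0} + f_{j,t}'(t)$, and we need both quantities uniformly bounded for $t$ near $t_0$. By Lemma \ref{lemma:unifconv}, the family $\{f_{j,t}\}$ is uniformly bounded on any compact set $[t_0,x_0]$ for $t$ near $t_0$. Substituting this $L^\infty$ bound into the integral form (\ref{eq:eveft}) of the eigenvalue equation and using H\"older continuity of the Brownian path gives
\begin{equation*}
|f_{j,t}'(x)-f_{j,t}'(t)| \leq C_1(x-t)^\gamma + C_2\int_t^x (y-t)^\gamma |f_{j,t}'(y)|\,dy + C_3(x-t),
\end{equation*}
to which Gronwall's lemma applies after splitting $|f_{j,t}'(y)| \leq |f_{j,t}'(t)| + |f_{j,t}'(y)-f_{j,t}'(t)|$; the outcome is an estimate $|f_{j,t}'(x)-f_{j,t}'(t)| \leq C_4(x-t)^\gamma + C_5 |f_{j,t}'(t)|(x-t)^{1+\gamma}$ valid on a fixed interval $[t,t+h_0]$ with constants independent of $t$.

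To close the bootstrap and produce a uniform bound on $f_{j,t}'(t)$ itself, integrate the previous inequality from $t$ to $t+h_0$ and use $f_{j,t}(t+h_0) = \int_t^{t+h_0} f_{j,t}'(y)\,dy$ to get
\begin{equation*}
|f_{j,t}'(t)|\,h_0 \leq |f_{j,t}(t+h_0)| + C_4' h_0^{1+\gamma} + C_5' |f_{j,t}'(t)| h_0^{2+\gamma}.
\end{equation*}
Since $|f_{j,t}(t+h_0)|$ is uniformly bounded by Lemma \ref{lemma:unifconv}, choosing $h_0$ small enough that $C_5' h_0^{1+\gamma} < 1/2$ lets us solve for $|f_{j,t}'(t)|$ and obtain a bound $|f_{j,t}'(t)| \leq K$ uniform in $t$. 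Feeding this back into the Gronwall estimate also bounds $\eta_{x_0}$ uniformly, so Proposition \ref{prop:almostlinear} applies with uniform constants and the argument outlined in the first paragraph goes through.
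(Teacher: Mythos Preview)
Your argument is correct and follows the same outline as the paper's proof: the same three-term triangle inequality with $a$ (the paper calls it $x$) playing the role of an intermediate evaluation point, Proposition~\ref{prop:almostlinear} controlling the two ``almost-linear'' terms, and Lemma~\ref{lemma:unifconv} controlling the middle term. Where you go beyond the paper is in your treatment of uniformity. The bound from Proposition~\ref{prop:almostlinear} carries the factor $\eta_{x_0}+f_{j,t}'(t)$, and the paper's proof simply asserts that the first two terms can be made small by taking $x$ close to $t$ and $t_0$, without explaining why $f_{j,t}'(t)$ stays bounded as $t$ varies. Your Gronwall-and-bootstrap step---feeding the uniform $L^\infty$ bound on $f_{j,t}$ from Lemma~\ref{lemma:unifconv} back into the integral equation~\eqref{eq:eveft}, then integrating once to extract a uniform bound on $|f_{j,t}'(t)|$---closes exactly this gap. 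So the approaches are the same in spirit, but your version is more complete on the point that actually matters.
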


\begin{proof}
Let $\eps>0$ We use the following: In a fixed neighborhood of $t_0$ we have the bound from Proposition \ref{prop:almostlinear} with a $C_{\delta, \gamma}$ depending on the neighborhood size $\delta$ and $0<\gamma<1/2$. We now observe that for $x> t\wedge t_0$ in a neighborhood of $t_0$ we have
\begin{align*}
|f_{j,t}'(t)-f_{j,t_0}'(t_0)| & \le |f_{j,t}'(t)-\frac{f_{j,t}(x)}{x-t}|+|f_{j,t_0}'(t)-\frac{f_{j,t_0}(x)}{x-t_0}|+ |\frac{f_{j,t}(x)}{x-t}-\frac{f_{j,t_0}(x)}{x-t_0}|\\
& \le C_{\delta, \gamma} \big((\eta_\delta+f_{j,t}'(t))(x-t)^{1+\gamma}+(\hat\eta_\delta+f_{j,t_0}'(t_0))(x-t_0)^{1+\gamma}\big)\\
& \hspace{9cm}+  |\frac{f_{j,t}(x)}{x-t}-\frac{f_{j,t_0}(x)}{x-t_0}|
\end{align*}
The previous convergence result Lemma \ref{lemma:unifconv} give us that the final term may be made arbitrarily small as $t\to t_0$ for any fixed $x$. Choose $t$ and $t_0$ close enough to $x$ so that the first two terms are bounded by $\eps/3$, then by letting $t$ go to $t_0$ (which does not impact the bounds on the first two terms) we will get that the final term is also bounded by $\eps/3$. Therefore 
\[
\lim_{t\to t_0} f_{j,t}'(t) = f_{j,t_0}'(t_0).
\]
\end{proof}

\begin{proposition}
\label{prop:derivativederivative}
For any fixed $k$ the process $\G k t$ is differentiable as a function of $t$. With the derivatives given by 
\[
\frac{d}{dt} \Lambda_j(t) =( f_{j,t}'(t))^2.
\]
\end{proposition}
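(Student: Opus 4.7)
The plan is to squeeze the difference quotient $\frac{\Lambda_j(t)-\Lambda_j(s)}{t-s}$ between matching one--sided bounds supplied by Proposition \ref{prop:finitediff}, tune the auxiliary parameter $\eps$ so both bounds collapse to a common limit, and then invoke the preceding lemma on continuity of $t \mapsto f'_{j,t}(t)$ to identify that limit as $(f'_{j,t_0}(t_0))^2$. Since existence of the right and left derivatives is checked the same way, I will run the argument with $s < t$ and then note the symmetric case.

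First, fix $t_0$ in an interval $[c,d]$ and fix $\gamma < 1/2$. Choose $\delta \in (0, \gamma/(1+\gamma))$ and set $\eps = \eps(t,s) = |t-s|^{-\delta}$; for $|t-s|$ small this satisfies the condition $0 < \eps < \rho/(t-s)$ required by Proposition \ref{prop:finitediff}. Substituting into the two--sided inequality
\[
\tfrac{\eps}{1+\eps}(f'_{j,s}(s))^2 - C_{\rho,\gamma}\eps^{1+\gamma}(t-s)^\gamma \ \le\ \tfrac{\Lambda_j(t)-\Lambda_j(s)}{t-s} \ \le\ \tfrac{1+\eps}{\eps}(f'_{j,t}(t))^2 + C_{\rho,\gamma}\eps^{1+\gamma}(t-s)^\gamma,
\]
the prefactors $\frac{\eps}{1+\eps}$ and $\frac{1+\eps}{\eps}$ both tend to $1$ as $t\to s$, while the error term behaves like $(t-s)^{\gamma-\delta(1+\gamma)} \to 0$ by the choice of $\delta$. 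As remarked in Remark \ref{rmk:derivative}, this reduces the problem of differentiability to convergence of the boundary derivatives $f'_{j,t}(t)$.

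Second, apply the lemma just preceding this proposition, which gives $\lim_{t\to t_0} f'_{j,t}(t) = f'_{j,t_0}(t_0)$. Specialising the sandwich with $s = t_0$ and letting $t \searrow t_0$, the lower bound converges to $(f'_{j,t_0}(t_0))^2$ (the left endpoint $f'_{j,s}(s)$ is constant) and the upper bound converges to $(f'_{j,t_0}(t_0))^2$ by the continuity lemma applied at $t \searrow t_0$. Hence the right derivative exists and equals $(f'_{j,t_0}(t_0))^2$. Running the same argument with $t = t_0$ and $s \nearrow t_0$ gives the matching left derivative. This establishes differentiability of $\Lambda_j(t)$ at $t_0$ with the claimed value, and since $t_0$ was arbitrary the process $\G{k}{t}$ is differentiable in $t$.

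Essentially every obstacle has been dealt with in the lemmas leading up to this statement; the only point requiring care is compatibility between the scale choice $\eps = |t-s|^{-\delta}$ and the admissible range $\eps < \rho/(t-s)$ in Proposition \ref{prop:finitediff}, which is automatic for $|t-s|$ sufficiently small and $\delta < 1$. Consequently the proof is essentially a bookkeeping step combining Proposition \ref{prop:finitediff} with the continuity of boundary derivatives.
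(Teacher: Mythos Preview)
Your proof is correct and follows essentially the same approach as the paper, which simply refers back to Remark \ref{rmk:derivative}: take $\eps=(t-s)^{-\delta}$ in Proposition \ref{prop:finitediff} and invoke the continuity lemma $\lim_{t\to t_0} f'_{j,t}(t)=f'_{j,t_0}(t_0)$. Your write-up is in fact more careful than the paper's, since you explicitly identify the constraint $\delta<\gamma/(1+\gamma)$ needed for the error term $\eps^{1+\gamma}(t-s)^\gamma$ to vanish.
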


See Remark \ref{rmk:derivative} for the proof.

\section{The discrete to continuous convergence} 

In this section we use the machinery developed for the proof of the original soft edge limit in order to show convergence of the $t$ dependent eigenvalue process. To do this we begin by recalling the general convergence theorem from section 5 of \cite{RRV}. 

\begin{theorem}[Theorem 5.1 \cite{RRV}]
\label{thm:conv}
Suppose that $H_n$ is a tridiagonal matrix with 
\begin{align*}
\text{ diagonal}& \qquad 2m_n + m_n y_{n,1}(1),  2m_n + m_n y_{n,1}(2), 2m_n + m_n y_{n,1}(3),...\\
\text{ off-diagonal}& \quad -m_n + \frac{1}{2}m_n y_{n,2}(1),  -m_n + \frac{1}{2}m_n y_{n,2}(2),  -m_n + \frac{1}{2}m_n y_{n,2}(3),...
\end{align*}
and $H= - \partial_x^2 + Y'(x)$ acting on $H'_{\textup{loc}}\mapsto D$ the space of distributions with boundary condition $f(0)=0$ (see \cite{RRV} for further details). Let $Y_{n,i}(x) = \sum_{j=1}^{\lfloor nx\rfloor} y_{n,i}(j)$. For any fixed $k$, the bottom $k$ eigenvalues of $H_n$ converge to the bottom $k$ eigenvalues of $H$ if the following two conditions are met:
\begin{enumerate}
\item (Tightness/Convergence) There exists a process $x\mapsto Y(x)$ such that 
\begin{align*}
(Y_{n,i}(x): x\ge 0 ) & \qquad\  i = 1,2 \quad \text{ are tight in law},\\
(Y_{n,1}(x)+ Y_{n,2}(x): x\ge 0 )&\  \Rightarrow \   (Y(x);x\ge 0) \quad \text{ in law},
\end{align*}
with respect to the Skorokhod topology of paths; see \cite{EthierKurtz} for the definitions.

\item (Growth/Oscillation bound). There is a decomposition
\[
y_{n,i}(k) = \frac{1}{m_n}(\eta_{n,i}(k)+ \omega_{n,i}(k)),
\]
for $\eta_{n,i}(k)\ge 0$, deterministic, unbounded non-decreasing functions $\bar \eta(x)>0, \zeta(x) \ge 1$, and random constants $\kappa_n(\omega)\ge 1$ defined on the same probability space which satisfy the following: The $\kappa_n$ are tight in distribution, and, almost surely,
\begin{align*}
\bar \eta(x)/ \kappa_n - \kappa_n \le \eta_{n,1}(x)+ \eta_{n,2}(x) &\le \kappa_n(1+ \bar \eta(x)),\\
\eta_{n,2}(x) \le 2 m_n^2.\\
|\omega_{n,1}(\xi)- \omega_{n,1}(x)|^2+|\omega_{n,2}(\xi)- \omega_{n,2}(x)|^2 & \le \kappa_n(1+ \bar \eta(x)/ \zeta(x))
\end{align*}
for all $n$ and $x,\xi \in [0, n/m_n]$ with $|x-\xi|\le 1$. 
\end{enumerate}
\end{theorem}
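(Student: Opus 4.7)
The plan is to mirror the functional-analytic framework of \cite{RRV}: realize the tridiagonal matrix $H_n$ as an unbounded operator on a subspace of $L^2[0,\infty)$ via step-function embedding, and then compare the quadratic form of $H_n$ with that of $H$ on a common domain.

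First I would set up the embedding. Identify a vector $(v(1),v(2),\ldots)$ with the right-continuous step function $v(x)$ on $[0,\infty)$ taking value $v(k)$ on $[(k-1)/m_n, k/m_n)$, so that $\|v\|_{\ell^2}^2 = m_n \|v\|_{L^2}^2$. A summation-by-parts computation (identical to \cite{RRV}) rewrites the quadratic form as
\begin{align*}
\langle v, H_n v\rangle \;=\; m_n\!\!\int_0^\infty (v_n'(x))^2\,dx \;+\; \int_0^\infty v(x)^2\, dY_{n,1}(x) \;+\; R_n(v),
\end{align*}
where $v_n'$ is the discrete derivative and $R_n$ is the cross-term controlled by $Y_{n,2}$. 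After a proper rescaling by $m_n$, condition (2) (the growth/oscillation bound on $\eta_{n,i}, \omega_{n,i}$) gives the critical a priori bound: there exist random constants so that for any normalized $v$ supported in $[0,n/m_n]$,
\begin{align*}
\langle v, H_n v\rangle \;\geq\; c_1\!\int (v')^2\,dx \;+\; c_2\!\int \bar\eta(x)v^2\,dx \;-\; \kappa_n,
\end{align*}
which forces eigenfunctions associated to bounded eigenvalues to decay rapidly and to be uniformly bounded in the weighted Sobolev norm controlling the form domain of $H$. This is the key ingredient for tightness.

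Next I would implement the variational comparison. Using Courant--Fischer for both $H_n$ and $H$: for the upper bound on $\limsup \lambda_k(H_n)$, take the span of the first $k$ eigenfunctions of $H$ (which are smooth and decay rapidly by \cite{RRV}) as test space for $H_n$; condition (1) (Skorokhod convergence of $Y_{n,1}+Y_{n,2}$ to $Y$ together with tightness of the individual pieces) together with the bound on $R_n$ gives $\langle v, H_n v\rangle \to \langle v, Hv\rangle$ for any such test function $v$, hence $\limsup \lambda_k(H_n) \leq \lambda_k(H)$. For the reverse inequality, take the bottom $k$ eigenfunctions of $H_n$; the a priori bound above gives tightness in the form domain, so along a subsequence they converge (uniformly on compacts and weakly in the relevant Sobolev sense) to functions $g_1,\ldots,g_k$, and the same Skorokhod/oscillation argument identifies the limits of the quadratic forms, yielding $\liminf \lambda_k(H_n) \geq \lambda_k(H)$ once one verifies orthonormality persists in the limit.

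The main obstacle is the convergence of the quadratic form $\int v^2\, dY_{n,1}$ to $\int v^2\, dY$: only the sum $Y_{n,1}+Y_{n,2}$ converges in law, so one cannot directly replace each piece. The trick, following \cite{RRV}, is to use the oscillation decomposition $y_{n,i} = (\eta_{n,i}+\omega_{n,i})/m_n$, integrate by parts against $v^2$, and exploit that the $\eta_{n,i}$ pieces reassemble into the drift part of $Y$ while the $\omega_{n,i}$ pieces contribute only through their quadratic variation, which is controlled by the oscillation bound uniformly in $n$. Once this technical step is in hand, the Skorokhod convergence of condition (1) and a continuity argument at continuity points of $Y$ deliver the convergence of the quadratic forms, and the minimax sandwich closes the proof.
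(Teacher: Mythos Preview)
The paper does not prove this theorem: it is quoted verbatim as Theorem~5.1 of \cite{RRV} and used as a black box to establish Theorem~\ref{thm:discretetocont}. There is therefore no ``paper's own proof'' to compare against. Your sketch is a faithful outline of the original argument in \cite{RRV} --- step-function embedding, quadratic-form identity via summation by parts, a priori coercivity from the growth/oscillation bound, and a two-sided Courant--Fischer comparison --- so as a summary of the cited proof it is accurate, but for the purposes of this paper no proof is expected here.
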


Ram\'irez, Rider, and Vir\'ag show in section 6 of \cite{RRV}, that the tridiagonal model $\Hmat{k}$ defined in \eqref{eq:ktridiagonal} with $k=1$ satisfies the the conditions of the theorem with $m_n=n^{1/3}$ and $Y(x)=\frac{x^2}{2}+ \frac{2}{\sqrt \beta} b_x$. The same arguments may be used to show that for $\Hmat{\lfloor tn^{1/3}\rfloor}$ the same convergence statements hold with $m_n= n^{1/3}$ and $Y(x) = \frac{x^2}{2}- t x + \frac{2}{\sqrt \beta} b_x$. These are two different distributional convergence statements, but with a slight modification of the proof of Theorem \ref{thm:conv} we may show a joint distributional convergence for any finite collection $\{t_1, t_2,...,t_j\}$. 

\begin{proof}[Proof of \ref{thm:discretetocont}]
Let $t_1<\cdots <t_\ell$ be any finite collection of times (possibly negative). We observe using the work in Section 6 of \cite{RRV} that the matrices $\Hmat{\lfloor t_1 n^{1/3}\rfloor},\Hmat{\lfloor t_2 n^{1/3}\rfloor},...,\Hmat{\lfloor t_\ell n^{1/3}\rfloor}$ satisfy the conditions of Theorem \ref{thm:conv} with $m_n= n^{1/3}$ and $Y^{(t_j)}(x) = \frac{x^2}{2}- t_j x + \frac{2}{\sqrt \beta} b_x$. Moreover we have 
\[
y_{n,i}^{(t_j)}(k) = y_{n,i}^{(t_1)}(k+\lfloor t_j n^{1/3}\rfloor).
\]
Because of this identity the if the conditions of Theorem \ref{thm:conv} hold for $t_1$ then they also hold for $t_2,...,t_\ell$.
Therefore for any subsequence we can extract a further subsequence such that we have the following joint distributional convergence:
\begin{align*}
( \int_0^x \eta_{n,i}^{(t_j)}(y)dy; x\ge 0 ) & \Rightarrow (\int_0^x \eta_i^{(t_j)} (y)dy; x\ge 0), \\
(Y_{n,i}(x); x\ge 0 ) & \Rightarrow ( \frac{x^2}{2}- t_j x +\frac{2}{\sqrt \beta} b_i(x+t_j) ; x\ge 0), \qquad j = 1,...,\ell \\
\kappa_n^{(t_j)} & \Rightarrow \kappa^{(t_j)}
\end{align*}
where the first line converges uniformly on compact subsets and the second in the Skorokhod topology. Notice that the brownian motions $b_i$ that appear are the same for all $j$. The Skorokhod representation theorem (see Theorem 1.8, Chapter 3, or \cite{EthierKurtz}) gives us that there exists a probability space so that the necessary convergence statements hold with probability 1. This allows us to reduce to working with the deterministic case and the remainder of the proof goes through unchanged. In all at this point we have proved that 
\[
\left\{ \lambda_1(\lfloor t_j n^{1/3}\rfloor),  \lambda_2(\lfloor t_j n^{1/3}\rfloor),..., \lambda_k(\lfloor t_j n^{1/3}\rfloor) \right\}_{j=1,...,\ell} \Rightarrow \left\{ \Lambda_1(t_j ),  \Lambda_2(t_j),..., \lambda_k(t_j) \right\}_{j=1,...,\ell}
\]
where $\Lambda_1(t_j)<  \Lambda_2(t_j)< \cdots $ are the eigenvalues of the operator
\[
H^{(t_j)} = - \frac{d^2}{dx^2} + x- t_j + \frac{2}{\sqrt \beta}db(x+t_j)
\]
acting on functions in $L^*[0,\infty)$. These are exactly the eigenvalues of the operator $\Hop{t_j}$ defined in \eqref{eq:Ht}. Therefore we have convergence of finite dimensional distributions which completes the proof of Theorem \ref{thm:discretetocont}.
\end{proof}

\section{Distribution of the derivatives}

We need to begin by showing that the eigenvalues of the discrete operator follow an approximately linear pattern where the `slope' is determined by the first entry of the eigenvector. Because we know the distribution of the spectral weights which are found in these first entries we can then use this property to determine the distribution of the eigenfunctions in the limit. This will in turn give us the derivative of the process as desired. 

Before continuing on to the proof of the proposition we will need some information on the distribution of $v_1$. 

\begin{lemma}[Dumitriu-Edelman \cite{DE}]
\label{lemma:dirichlet}
The squares of the spectral weights $q_i = (v_1^{(i)})^2$ associated to the tridiagonal model in \eqref{eq:tridiagonal} are Dirichlet with parameters $(\frac{\beta}{2},...,\frac{\beta}{2})$. These weights are the square of the first entry of each normalized eigenvector. The marginal distribution of a single spectral weight is 
\[
q_i \sim \textup{Beta} \Big(\frac{\beta}{2},(n-1)\frac{\beta}{2}\Big), \qquad Eq_i = \frac{1}{n}, \qquad \var q_i = \frac{\beta (n-1)}{n^2(\beta n+2)}.
\]
\end{lemma}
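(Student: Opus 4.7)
The plan is to recover this distributional statement by computing the joint density of the spectral data of the tridiagonal matrix $A_\beta$ from \eqref{eq:tridiagonal} and reading off the marginal for the weight vector. The starting observation is that a symmetric tridiagonal matrix with strictly positive off-diagonal entries is uniquely determined by its eigenvalues $\lambda_1 < \cdots < \lambda_n$ together with the first row of its orthonormal eigenvector matrix; after fixing the sign convention that each eigenvector has positive first entry, this row is encoded by $(\sqrt{q_1},\ldots,\sqrt{q_n})$ with $q_i > 0$ and $\sum_i q_i = 1$. This gives a bijective parametrization of the $(2n-1)$-dimensional space of such matrices by $(\lambda, q)$, matching the number of independent entries in $A_\beta$.

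The first step is to write down the joint density of the entries under the law of $A_\beta$: the $n$ Gaussian diagonal entries contribute $\exp(-\tfrac{\beta}{4}\sum_i a_{ii}^2)$, while each independent off-diagonal $b_j \sim \chi_{k\beta}/\sqrt{\beta}$ contributes a factor proportional to $b_j^{k\beta - 1} \exp(-\tfrac{\beta}{4} b_j^2)$. The second and main step, which is the technical core, is to compute the Jacobian of the change of variables $(a_{ii}, b_j) \mapsto (\lambda_i, q_i)$. The classical identity, going back to inverse spectral theory of Jacobi matrices and worked out in \cite{DE}, is
\[
\prod_{j=1}^{n-1} b_j \;=\; \prod_{i<j} |\lambda_i - \lambda_j| \cdot \prod_{i=1}^n \sqrt{q_i},
\]
and the Jacobian combines with the $b_j^{k\beta - 1}$ weights so that, after telescoping powers of the $b_j$, the off-diagonal densities are converted precisely into $\prod_{i<j}|\lambda_i - \lambda_j|^\beta \prod_i q_i^{\beta/2 - 1}$. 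Verifying this Jacobian identity is the main obstacle: it requires a careful inductive argument tracking how successive columns of the eigenvector matrix are determined by the three-term recursion once the first entries are fixed.

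The third step is the algebraic bookkeeping that uses the trace invariants $\sum_i \lambda_i = \sum_i a_{ii}$ and $\sum_i \lambda_i^2 = \sum_i a_{ii}^2 + 2\sum_j b_j^2$ to rewrite the Gaussian/chi quadratic form as $\exp(-\tfrac{\beta}{4}\sum_i \lambda_i^2)$, so that the full joint density factors as
\[
\frac{1}{Z_{n,\beta}} \prod_{i<j}|\lambda_i - \lambda_j|^\beta\, e^{-\tfrac{\beta}{4}\sum_i \lambda_i^2} \;\cdot\; \frac{\Gamma(n\beta/2)}{\Gamma(\beta/2)^n}\prod_{i=1}^n q_i^{\beta/2 - 1}.
\]
This factorization simultaneously recovers the $\beta$-Hermite law \eqref{eq:hermite} for the eigenvalues and the $\textup{Dirichlet}(\tfrac{\beta}{2}, \ldots, \tfrac{\beta}{2})$ law for the weight vector, and shows that $\lambda$ and $q$ are \emph{independent}. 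The marginal statements for a single $q_i$ then follow immediately: the aggregation property of the Dirichlet gives $q_i \sim \textup{Beta}(\tfrac{\beta}{2}, (n-1)\tfrac{\beta}{2})$, and the claimed mean $Eq_i = 1/n$ and variance $\var q_i = \beta(n-1)/(n^2(\beta n + 2))$ are the standard first two moments of this Beta law.
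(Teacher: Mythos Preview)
The paper does not give a proof of this lemma at all: it is stated as a citation of Dumitriu--Edelman \cite{DE} and used as a black box. Your sketch is therefore not being compared against anything in the paper itself, but it does accurately reproduce the argument from the original reference --- the bijective spectral parametrization of Jacobi matrices, the Jacobian computation that converts the $\chi$ off-diagonal densities into the Vandermonde-times-Dirichlet factor, and the trace identities that collapse the Gaussian part into $e^{-\frac{\beta}{4}\sum\lambda_i^2}$. The marginal Beta claim and its moments then follow by aggregation, exactly as you say. So your proposal is correct and is essentially the proof the paper is citing rather than giving.

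One small remark: the variance formula as printed in the lemma, $\beta(n-1)/(n^2(\beta n+2))$, does not match the Beta$(\tfrac{\beta}{2},(n-1)\tfrac{\beta}{2})$ variance, which is $2(n-1)/(n^2(\beta n+2))$. This is a typo in the statement, not a flaw in your argument; your appeal to ``the standard first two moments of this Beta law'' is the right thing to do, and only the mean is actually used downstream (in the proof of Proposition~\ref{lem:discretelinear}).
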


\begin{lemma}
\label{lem:gammalimit}
Let $q_i$ be as above, then for any $\ell \in \mathbb{N}$ we have
\[
n(q_1, q_2,...,q_\ell ) \Rightarrow (\Gamma_{1}, \Gamma_{2},..., \Gamma_{\ell}), \qquad \Gamma_{i}\sim \Gamma(\tfrac{\beta}{2}, \tfrac{2}{\beta})
\]
where $\Gamma_{1},...,\Gamma_{\ell}$ are independent. This is using the shape and scale convention for Gamma random variables.
\end{lemma}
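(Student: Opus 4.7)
The plan is to use the classical Gamma construction of the Dirichlet distribution and then apply the law of large numbers. Concretely, let $X_1,\ldots,X_n$ be i.i.d.\ $\Gamma(\tfrac{\beta}{2},1)$ random variables (shape $\tfrac{\beta}{2}$, scale $1$), and set $S_n=\sum_{i=1}^n X_i$. A standard fact (see e.g.\ the proof of Lemma \ref{lemma:dirichlet} in \cite{DE}) is that
\begin{equation*}
(q_1,q_2,\ldots,q_n)\ \stackrel{d}{=}\ \left(\frac{X_1}{S_n},\frac{X_2}{S_n},\ldots,\frac{X_n}{S_n}\right),
\end{equation*}
since a vector of normalized i.i.d.\ Gammas with common shape parameter $\tfrac{\beta}{2}$ is Dirichlet$(\tfrac{\beta}{2},\ldots,\tfrac{\beta}{2})$. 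Hence
\begin{equation*}
n(q_1,\ldots,q_\ell)\ \stackrel{d}{=}\ \frac{n}{S_n}\,(X_1,\ldots,X_\ell).
\end{equation*}

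Next I would apply the strong law of large numbers: since $E X_1=\tfrac{\beta}{2}$, we have $S_n/n\to \tfrac{\beta}{2}$ almost surely, and therefore $n/S_n\to \tfrac{2}{\beta}$ almost surely. On the other hand, $(X_1,\ldots,X_\ell)$ is a fixed (in $n$) vector of independent $\Gamma(\tfrac{\beta}{2},1)$ random variables, and is independent of the limit $\tfrac{2}{\beta}$ in the sense required by Slutsky's theorem. Combining these gives
\begin{equation*}
n(q_1,\ldots,q_\ell)\ \Rightarrow\ \left(\tfrac{2}{\beta}X_1,\ldots,\tfrac{2}{\beta}X_\ell\right).
\end{equation*}

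Finally, under the shape-and-scale convention, if $X\sim\Gamma(\tfrac{\beta}{2},1)$ then $cX\sim\Gamma(\tfrac{\beta}{2},c)$, so each $\tfrac{2}{\beta}X_i\sim\Gamma(\tfrac{\beta}{2},\tfrac{2}{\beta})$, and these are independent because the $X_i$ are. This identifies the limit as claimed. There is no real obstacle here: the only thing worth checking carefully is the conventions in Lemma \ref{lemma:dirichlet} to make sure the shape parameter of the generating Gammas is indeed $\tfrac{\beta}{2}$ (which is consistent with the stated marginal $\mathrm{Beta}(\tfrac{\beta}{2},(n-1)\tfrac{\beta}{2})$ of a single $q_i$), and to confirm that scale $1$ gives mean $\tfrac{\beta}{2}$, so that the renormalizing factor is $\tfrac{2}{\beta}$.
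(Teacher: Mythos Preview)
Your proof is correct and follows essentially the same route as the paper: represent the Dirichlet vector via normalized i.i.d.\ $\Gamma(\tfrac{\beta}{2},1)$ variables, use the law of large numbers on $S_n/n$, and then identify the limit via the scaling property of the Gamma distribution. The only cosmetic difference is that you invoke Slutsky's theorem explicitly, whereas the paper leaves that step implicit.
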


\begin{proof}

From Lemma \ref{lemma:dirichlet} we know that the spectral weights $q_1,...,q_n$ are exchangeable with distribution Dirichlet$(\frac{\beta}{2}, ...,\frac{\beta}{2})$ and $(v^{(i)}_1)^2 = q_i$. We now use the following characterization of a Dirichlet distribution: Let $X_1, X_2,...,X_n$ be independent identically distributed with $X_i \sim $Gamma $(\frac{\beta}{2},1)$, then for 
\[
q_i = \frac{X_i}{X_1+ X_2 + \cdots +X_n}, \quad \text{ we get } \quad (q_1, q_2, ...,q_n) \sim \textup{Dirichlet}( \tfrac{\beta}{2},..., \tfrac{\beta}{2}).
\]
We observe that by the strong law of large numbers $(X_1+ \cdots + X_n)/n \to \frac{\beta}{2}$ in probability, and note that for $\eta \sim $ Gamma $(k,\theta)$, $c \eta \sim $ Gamma $(k,c\theta)$. Therefore this characterization this is enough to give the desired joint convergence statement.
\end{proof}

We now prove a proposition that will show that the eigenvector is close enough to linear that in the limit we will get that the derivative at 0 is determined by the distribution of the spectral weights. 
\begin{proposition}
\label{lem:discretelinear}
Let $v$ be the eigenvector associated to $\lambda_i(n,0)$ the $i$th lowest eigenvalue of $\Hmat{0}$ defined in \eqref{eq:ktridiagonal}. For any $\eps>0$ there exists a set $\mathcal{A}_\eps \subset \Omega$ with $P(\mathcal{A}_\eps)>1-\eps$,  and $x_0>0$ sufficiently small such that for all $t \in [0,x_0]$ and $\omega \in \mathcal(A)_\eps$
\[
|v_{\lfloor tn^{1/3}\rfloor}- \lfloor tn^{1/3}\rfloor v_1| \le C \frac{t^2}{n^{1/6}} \sqrt{\frac{x_0}{\eps}}.
\]
\end{proposition}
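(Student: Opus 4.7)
The plan is to adapt the continuous-case argument of Proposition \ref{prop:almostlinear} to the tridiagonal setting. I would start by writing the eigenvalue equation $\Hmat{0} v = \lambda v$ as the three-term recurrence
\[
-a_{k-1} v_{k-1} + d_k v_k - a_k v_{k+1} = \lambda v_k, \qquad v_0 = 0,
\]
with $a_k = \chi_{(n-k)\beta}/\sqrt{\beta}$ and $d_k = 2\sqrt{n} - \mathcal{N}_k(0,2)/\sqrt{\beta}$. Rewriting this as a discrete divergence and summing twice against the Green's function of the leading-order discrete Laplacian yields a Duhamel-type representation of the deviation from linearity,
\[
v_m - m v_1 \;=\; v_1 \sum_{j=1}^{m-1}\Bigl(\tfrac{a_0}{a_j}-1\Bigr) \;+\; \sum_{j=1}^{m-1}\tfrac{1}{a_j}\sum_{k=1}^{j}(d_k - \lambda - a_k - a_{k-1})\,v_k.
\]

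Next I would decompose $a_k$ and $d_k$ into their means plus fluctuations. The first sum above is deterministic of order $v_1 m^2/n$, well below the target scale. In the second sum, the mean of $d_k - a_k - a_{k-1}$ is $(2k-1)/(2\sqrt n) + O(1/n)$, while the fluctuation part is a martingale whose increments are precisely those encoded by $Y_{n,1}$ and $Y_{n,2}$ in Theorem \ref{thm:conv}. Under the a priori bound $|v_k| \le 2 k v_1$ (to be closed by bootstrap), the deterministic contribution is $O(v_1 m^4/n + v_1 |\lambda| m^3/\sqrt n)$ and the noise contribution is $O(v_1 m^{5/2}/\sqrt n)$, by Doob's inequality applied to the martingale driver. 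Setting $m = \lfloor t n^{1/3}\rfloor$ and using $t \le x_0$, every error term is dominated by $C v_1 t^2 n^{1/3}\sqrt{x_0}$.

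I would then assemble $\mathcal{A}_\eps$ as the intersection of three events, each of probability at least $1-\eps/3$: (i) $\{v_1^2 \le C/(n\eps)\}$, from Markov's inequality applied to the Beta marginal in Lemma \ref{lemma:dirichlet} (where $E v_1^2 = 1/n$); (ii) a uniform-control event for $\sup_{x\in[0,x_0]}|Y_{n,i}(x)|$ together with the quadratic-variation increments on $[0, x_0]$, available from the tightness established in Section 6 of \cite{RRV}; and (iii) $\{|\lambda_i(n,0)| \le M_\eps\}$, provided by the convergence in Theorem \ref{thm:discretetocont}. On $\mathcal{A}_\eps$ we have $v_1 \le C/\sqrt{n\eps}$, so substituting yields $|v_m - m v_1| \le C t^2 n^{-1/6}\sqrt{x_0/\eps}$, exactly the claimed bound.

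The main obstacle is the implicit bootstrap, because the Duhamel formula contains $v_k$ rather than $k v_1$ on the right-hand side. I would close this by a continuity/stopping-index argument: let $\tau = \min\{m \le \lfloor x_0 n^{1/3}\rfloor : |v_m - m v_1| > m v_1\}$, and show that on $\mathcal{A}_\eps$, with $x_0$ chosen small enough that $C\sqrt{x_0/\eps} < 1/2$, the derived estimate at $m = \tau$ is strictly smaller than the stopping threshold $\tau v_1$, contradicting the definition of $\tau$. This forces $\tau > \lfloor x_0 n^{1/3}\rfloor$ and gives the claim uniformly in $t \in [0, x_0]$.
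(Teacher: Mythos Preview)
Your proposal is correct and follows essentially the same route as the paper: sum the three-term recurrence, control the martingale fluctuations by Doob's maximal inequality combined with summation by parts against the a priori bound $|v_k|\lesssim kv_1$, and close the bootstrap by a continuity argument. The paper rescales the matrix by $n^{-2/3}$ first and works with the increments $v_{k+1}-v_k-v_1$ rather than your double-sum Duhamel representation, and its bootstrap closure is stated more informally than your stopping-index argument, but the substance is identical.
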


\begin{proof}
Recall that we're working with the matrix $\Hmat{0} = n^{1/6}(2\sqrt{n}- A_\beta)$. To start let's scale the $n^{2/3}$ out of the leading term then the resulting matrix has the form
\[
n^{-2/3}H_n^\beta = \left[ \begin{array}{cccc} 2 + \rho_1 & -1 + r_1 &&\\
-1+ r_1 & 2+ \rho_2 & -1+ r_2 & \\
& \ddots & \ddots & \ddots \end{array}\right].
\]
Under this we have that $\rho_i \sim \frac{1}{\sqrt n} \mathcal{N}(0,2)$ and $r_{i} = \frac{i}{2 n} + \frac{1}{\sqrt n} \eta_i$ where $\eta_i$ is an order 1 random mean 0 variable with Gaussian tails. Notice that this rescaling does not change the distribution of the eigenvectors, but the lowest eigenvalues will now be on the order of $n^{-2/3}$. Before we start we give two bounds: 
For the first we use Doob's martingale inequality to get that
\begin{equation}
\label{eq:RVsumbnd}
P\left( \max_{1\le k \le x_0n^{1/3}}  \frac{1}{\sqrt{n}}\sum_{\ell=1}^k \eta_\ell  \ge \frac{\sqrt{x_0}}{n^{1/3}} M \right) \le \frac{C}{M^2},
\end{equation}
With a similar bound holding for the $\rho$'s.
Next, suppose that $a_1, a_2, a_3,...$ is a sequence such that $|(a_{k+1}-a_k) - a_1|\le \tau$ for all $k\ge 1$, then 
\begin{equation}
\label{eq:incbnd}
|a_k| \le k(a_1+ \tau).
\end{equation}

Now we move on the the main part of the proof. Suppose that $v$ solves $\Hmat{0} v = \lambda_j v$ with $\|v\|=1$. We can check that $v_k$ satisfies the following: 
\begin{align}
v_{k+1}-v_k &= (v_k-v_{k-1})+ r_{k-1}v_{k-1}+ (\rho_k-\lambda)v_k+ r_kv_{k+1}\notag \\
&=  \sum_{\ell=1}^{k} r_{\ell-1}v_{\ell-1}+ \rho_\ell v_\ell -\lambda v_\ell+ r_\ell v_{\ell+1} \label{eq:aaa}
\end{align}
if we assume that $|v_{k+1}-v_k- v_1| \le 1/\sqrt{n}$ for $k \le x_0 n^{1/3}$ then using \eqref{eq:incbnd} we get that $|v_k| \le k(v_1+ \eta/\sqrt{n})$. We rewrite the final term in the sum:
\[
\sum_{\ell=1}^k r_\ell v_{\ell+1} = \sum_{\ell=1}^k (R_{\ell+1}-R_{\ell}) v_{\ell+1} = v_{k+1} R_{k+1} + \sum_{\ell=1}^k R_\ell(v_{\ell}-v_{\ell-1})
\]
By \eqref{eq:RVsumbnd} there exists a set $\mathcal{A}_M$ of size $1-\frac{C}{M^2}$ such that $|R_k| \le  \frac{\sqrt{x_0}M}{n^{1/3}} $ for some fixed $C$. On this set we get 
\[
\sum_{\ell=1}^k r_\ell v_{\ell+1} \le 2(v_1+1 /\sqrt{n}) \frac{\sqrt{x_0}M }{n^{1/3}} \cdot k.
\]
This same argument holds for the other two random terms in \eqref{eq:aaa}. Using Lemma \ref{lemma:dirichlet} and the observation $v_1= q_j$ (which is on the order of $1/n$) and the fact that $\lambda_j$ is order $1/n^{2/3}$ we get that on a set of size $1-3\frac{C}{M^2}$.
\[
|v_{k+1}-v_k - v_1| \le |v_{k+1}-v_k|-|v_1| \le C \frac{1}{\sqrt{n}} \frac{\sqrt{x_0}M k}{n^{1/3}}.
\]
This validates our original assumption that $|v_{k+1}-v_k- v_1| \le 1/\sqrt{n}$ for $k \le x_0 n^{1/3}$ where $x_0$ is sufficiently small. 
%
Finally this yields 
\[
|v_{k+1}-(k+1)v_1| \le C \frac{1}{\sqrt{n}} \frac{\sqrt{x_0}M k(k+1)}{2n^{1/3}},
\]
which for  $k = \lfloor tn^{1/3}\rfloor$ give us 
\[
|v_{\lfloor tn^{1/3}\rfloor}- \lfloor tn^{1/3}\rfloor v_1| \le \tilde C \frac{1}{n^{1/6}}t^2 \sqrt{x_0}M \quad \text{ for some constant } \tilde C.
\]
Take $M = \frac{C'}{\sqrt \eps}$ for some constant $C'$ to complete the proof.

\end{proof}

\begin{proposition}
At any fixed time $t>0$ the derivatives of the process $\G{t}{k}$ in $t$ are independent with distribution
\[
\frac{d}{dt} \Lambda_j(t) = \Gamma_j(t),\quad \text{ for i.i.d.}\quad  \Gamma_j(t) \sim \textup{ Gamma }(\frac{\beta}{2},\frac{2}{\beta}),
\] 
for $j = 1, 2, ...,k$.
\end{proposition}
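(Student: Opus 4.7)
The plan is to identify the joint law of the derivatives by transferring it to the tridiagonal side, where Lemma \ref{lem:gammalimit} provides an explicit joint Gamma limit for the squared first entries of the discrete eigenvectors.

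First, the stationarity of $\G{k}{t}-t$ together with Proposition \ref{prop:derivativederivative} reduces the problem to identifying the law of $\bigl((f'_{j,0}(0))^2\bigr)_{j=1}^k$; the result at any fixed $t>0$ then follows because the derivative of a stationary process has a $t$-independent one-dimensional distribution.

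Second, I would prove the joint convergence in distribution
\[
\bigl(\sqrt n\, v_1^{(1)}, \ldots, \sqrt n\, v_1^{(k)}\bigr) \Longrightarrow \bigl(f'_{1,0}(0), \ldots, f'_{k,0}(0)\bigr),
\]
where $v^{(j)}$ is the $\ell^2$-normalized eigenvector of $\Hmat{0}$ for the $j$th smallest eigenvalue $\lambda_j(n,0)$. Applying the Skorokhod representation theorem to the coupling used in the proof of Theorem \ref{thm:discretetocont}, I may pass to a probability space on which the joint almost-sure convergence
\[
n^{1/6} v^{(j)}_{\lfloor x n^{1/3}\rfloor} \longrightarrow f_{j,0}(x), \qquad j = 1, \ldots, k,
\]
holds uniformly on compact subsets of $[0,\infty)$; this eigenvector statement is built into the operator-convergence machinery of \cite{RRV} alongside the eigenvalue convergence of Theorem \ref{thm:conv}. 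Fix $\eps > 0$ and let $\mathcal{A}_\eps$ and $x_0$ be as in Proposition \ref{lem:discretelinear}; on $\mathcal{A}_\eps$, for each $t \in [0, x_0]$,
\[
\bigl| n^{1/6} v^{(j)}_{\lfloor t n^{1/3}\rfloor} - n^{1/6}\lfloor t n^{1/3}\rfloor v_1^{(j)} \bigr| \le C\, t^2 \sqrt{x_0/\eps}.
\]
Sending $n\to\infty$ using $n^{1/6}\lfloor t n^{1/3}\rfloor = t\sqrt n + O(n^{-1/6})$ together with the eigenvector convergence, every subsequential limit $L_j$ of $\sqrt n\, v_1^{(j)}$ satisfies $|f_{j,0}(t) - t L_j| \le C t^2 \sqrt{x_0/\eps}$. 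Dividing by $t$, sending $t \to 0$, and invoking $f_{j,0}(0) = 0$ along with the existence of $f'_{j,0}(0)$ (from the regularity expressed in Proposition \ref{prop:almostlinear}), we obtain $L_j = f'_{j,0}(0)$ on $\mathcal{A}_\eps$. Since $\eps$ is arbitrary, this gives the desired convergence in probability, hence in distribution.

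Third, combine with Lemma \ref{lem:gammalimit}: the spectral weights $q_j = (v_1^{(j)})^2$ satisfy $n(q_1, \ldots, q_k) \Rightarrow (\Gamma_1, \ldots, \Gamma_k)$ with $\Gamma_j$ i.i.d.\ $\Gamma(\beta/2, 2/\beta)$. Squaring the convergence from step two and matching limits identifies $\bigl((f'_{j,0}(0))^2\bigr)_{j=1}^k$ in law as $(\Gamma_1, \ldots, \Gamma_k)$; by stationarity the same holds at every fixed $t > 0$. The principal obstacle is extracting the joint eigenvector convergence in step two from the eigenvalue-only statement of Theorem \ref{thm:conv}; this should follow from the same operator-level compactness used in \cite{RRV} combined with simplicity of the $\Hop{0}$ spectrum, but must be made explicit. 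A secondary technical point is the order of limits ($n\to\infty$ before $t\to 0$), which is legitimate because the bound in Proposition \ref{lem:discretelinear} is uniform in $n$ with polynomial dependence on $t$.
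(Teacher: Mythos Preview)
Your proposal is correct and follows essentially the same route as the paper: reduce to $t=0$ by stationarity, use the RRV eigenvector convergence (the paper cites Lemma~5.8 of \cite{RRV}) together with the uniform-in-$n$ discrete near-linearity bound of Proposition~\ref{lem:discretelinear} to pass the bound to the limit, divide by $t$ and send $t\to 0$ to identify $f'_{j,0}(0)$ with the limit of $\sqrt n\, v_1^{(j)}$, and then invoke Lemma~\ref{lem:gammalimit}. The only cosmetic difference is that the paper packages the linear approximation $n^{1/6}\lfloor x n^{1/3}\rfloor v_1^{(j)}$ as a separate step function and takes the joint distributional limit of the triple (eigenvector, linear piece, difference) rather than arguing via subsequential limits $L_j$; your acknowledged ``principal obstacle'' of joint eigenvector convergence is handled in the paper exactly as you anticipate.
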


\begin{proof}
We begin with the observation that the process $\G{t}{k}-t$ stationary in $t$ and therefore the distribution of the derivative for all $t$ is determined by the distribution of the derivate at $t=0$. Let $v^{(1)},...,v^{(\ell)}$ be the eigenvectors of $\Hmat{0}$. We will show that $\{(f_{i,0}'(0))^2\}_{i=1}^\ell$ are independent with the desired distribution by showing that $\lim_{n\to \infty} \sqrt{n} v^{(i)}_{1} \ed f_{i,0}'(0)$, which together with Lemma \ref{lem:gammalimit} will imply the result.

We embed $v^{(1)},...,v^{(\ell)}$ the eigenvectors of $\Hmat{0}$ as step functions with $v^{(i,n)}(x)=n^{1/6} v^{(i)}_{\lfloor x n^{1/3}\rfloor}$ in $L^2[0,n^{2/3})$, and we can check that $\|v^{(i,n)}\|_{L^2}= (n^{1/6})^2 n^{-1/3} \|v^{(i)}\|_2=1$. Similarly we embed the vector $L_{v^{(i)}} = [v_1^{(i)}, 2 v_1^{(i)} , 3v_1^{(i)} , ..., \lfloor tn^{1/3} v_1^{(i)} \rfloor, 0,...,0 ]^t$ as the step function $L_{v^{(i)}}^{(n)}(x) = n^{1/6} v_1^{(i)}\lfloor xn^{1/3}\rfloor$ for $x<t$. Here we perform the truncation so that the $L_2$ norm remains bounded.  From the proof of Theorem \ref{thm:discretetocont} (see Lemma 5.8 \cite{RRV}) we get that there exists a subsequence along which 
\begin{align}
\label{eq:efuncconvergence}
\begin{array}{c}
(v^{(1,n)}, v^{(2,n)},\ldots , v^{(\ell,n)})\\
(L_{v^{(1)}}^{(n)}(x), L_{v^{(2)}}^{(n)}(x), \ldots , L_{v^{(\ell)}}^{(n)}(x) )\\
(v^{(1,n)}(x)- L_{v^{(1)}}^{(n)}(x), \ldots  , v^{(\ell,n)}(x)-L_{v^{(\ell)}}^{(n)}(x))
\end{array} \Rightarrow 
\begin{array}{c}
(f_{1,0}, f_{2,0},\ldots , f_{\ell,0})\\
(x\sqrt{\Gamma_{1}},x \sqrt{ \Gamma_{2}},\ldots , x\sqrt{\Gamma_{\ell}})\\
(f_{1,0}(x)-x\sqrt{\Gamma_{1}}, \ldots ,f_{\ell,0}(x)-x\sqrt{\Gamma_{\ell}})
\end{array}
\end{align}
jointly in law as functions of $x$, where the $\Gamma_i$ are defined as in Lemma \ref{lem:gammalimit}. 

From Proposition \ref{lem:discretelinear} we have that for any $\eps>0$ there exist $\mathcal{A}_\eps\subset \Omega$ with $P(\mathcal{A}_\eps)>1-\eps$ such that for all $x<x_0$
\[
|n^{1/6}v^{(i)}_{\lfloor xn^{1/3}\rfloor}- n^{1/6} \lfloor xn^{1/3}\rfloor v^{(i)}_1| \le C x^2 \sqrt{\frac{x_0}{\eps}}.
\]
From the distributional limit it follows that the same hold for $f_{i,0}(x)-x\sqrt{\Gamma_{i}}$. On the set $\mathcal{A}_{\eps,\infty}$ we get that 
\[
\lim_{x\to 0} \frac{f_{i,0}(x)- x\sqrt{\Gamma_{i}}}{t} = 0
\]
which is equivalent to $f_{i,0}'(0)= \sqrt{\Gamma_{i}}$. Since $\eps$ may be made arbitrarily small this is enough to give us that it holds with probability 1.
%
 \end{proof}


\bibliographystyle{amsplain}
\bibliography{rmt.bib}

\def\cprime{$'$}
\providecommand{\bysame}{\leavevmode\hbox to3em{\hrulefill}\thinspace}
\providecommand{\MR}{\relax\ifhmode\unskip\space\fi MR }
\providecommand{\MRhref}[2]{%
  \href{http://www.ams.org/mathscinet-getitem?mr=#1}{#2}
}
\providecommand{\href}[2]{#2}
\begin{thebibliography}{1}

\bibitem{DE}
Ioana Dumitriu and Alan Edelman, \emph{Matrix models for beta ensembles}, J.
  Math. Phys. \textbf{43} (2002), no.~11, 5830--5847. \MR{MR1936554
  (2004g:82044)}

\bibitem{ES}
Alan Edelman and Brian~D. Sutton, \emph{From random matrices to stochastic
  operators}, 2007, math-ph/0607038.

\bibitem{EthierKurtz}
Stewart~N. Ethier and Thomas~G. Kurtz, \emph{Markov processes}, Wiley Series in
  Probability and Mathematical Statistics: Probability and Mathematical
  Statistics, John Wiley \& Sons Inc., New York, 1986, Characterization and
  convergence. \MR{838085 (88a:60130)}

\bibitem{FN1}
Peter~J Forrester and Taro Nagao, \emph{Determinantal correlations for
  classical projection processes}, Journal of Statistical Mechanics: Theory and
  Experiment \textbf{2011} (2011), no.~08, P08011.

\bibitem{FN2}
Taro Nagao and Peter~J Forrester, \emph{Multilevel dynamical correlation
  functions for dyson's brownian motion model of random matrices}, Physics
  Letters A \textbf{247} (1998), no.~1, 42 -- 46.

\bibitem{RRV}
Jos{{\'e}}~A. Ram{\'{\i}}rez, Brian Rider, and B{{\'a}}lint Vir{{\'a}}g,
  \emph{Beta ensembles, stochastic {A}iry spectrum, and a diffusion}, J. Amer.
  Math. Soc. \textbf{24} (2011), no.~4, 919--944. \MR{2813333 (2012c:60022)}

\end{thebibliography}

\end{document}